\newtheorem{theorem}{Theorem}[section]
\newtheorem{lemma}[theorem]{Lemma}
\theoremstyle{remark}
\newtheorem*{remark}{Remark}
\newcommand{\R}{{\mathbb R}}
\newcommand{\Z}{{\mathbb Z}}
\newcommand{\GP}{{\mathfrak P}}
\newcommand{\Gn}{{\mathfrak n}}
\newcommand{\expo}{{\textup{exp}}}
\title{\textbf{Translational tilings by a polytope, with multiplicity}}
\author
{\\Nick Gravin\footnote{Division of Mathematical Sciences, School of
Physical and Mathematical Sciences, Nanyang Technological
University, Singapore. Email: {\tt ngravin@pmail.ntu.edu.sg,
rsinai@ntu.edu.sg, shir0010@ntu.edu.sg}}\footnotemark[1]
\footnotemark[2] \and \\Sinai Robins\footnotemark[1] \and \\Dmitry
Shiryaev\footnotemark[1] \footnote{St.Petersburg Department of
Steklov Mathematical Institute RAS, Russia.} }
\date{}
\begin{document}

\maketitle \thispagestyle{empty}
\begin{abstract}
We study the problem of covering $\R^d$ by overlapping translates of
a convex body $P$, such that almost every point of $\R^d$ is covered
exactly $k$ times. Such a covering of Euclidean space by translations is called a $k$-tiling.   The
investigation of tilings (i.e. $1$-tilings in this context) by translations began with  the work of Fedorov \cite{Fed}  and Minkowski~\cite{Min}.
Here we extend the investigations of Minkowski to $k$-tilings by proving that if a convex body
$k$-tiles $\R^d$ by translations, then it is centrally symmetric, and
its facets are also centrally symmetric.  These are the analogues of Minkowski's conditions for $1$-tiling polytopes. 
Conversely, in the case
 that $P$ is a rational polytope, we also prove that if $P$ is
centrally symmetric and has centrally symmetric facets, then $P$
must $k$-tile $\R^d$ for some positive integer $k$. 
\end{abstract}

\section{Introduction}

Suppose we are given a convex object $P$, and a multiset of discrete
translation vectors $\Lambda$.  We wish to cover all of $\R^d$ by
translating $P$ using the translation vectors in $\Lambda$, such
that each point $x \in \R^d$ is covered exactly $k$ times. Along the
boundary points of $P$ there may be some technical lower-dimensional
problems, but if we require that each point which does not lie on the boundary of any translate of $P$ 
to be covered exactly $k$ times, then we call such a
covering of $\R^d$ a $k$-tiling. The traditional field of tilings of
Euclidean space by translates of a single convex object $P$ has a
long and rich history. The usual notion of a tiling by translations
is thus equivalent  to the notion of a $1$-tiling. The reader is
invited to consult the books by Alexandrov \cite{Alex} and
Gruber~\cite{Gru} for a nice overview of the problem of tiling space
with translates of one convex body.     

Tilings of $\R^d$ by translations of a single object have been
extensively studied from as early as 1881~\cite{Fed}, by the
mathematical crystallographer Fedorov, and are an active research
area today.   For example, translational tilings of sets on the real line have
been studied in the $90$'s by Lagarias and Wang \cite{Jeff}.
There is also a beautiful  recent survey article on tilings in various different mathematical contexts, 
by Kolountzakis and Matolcsi~\cite{Kol3}.

We first note that if we have any $k$-tiling by a
convex object $P$, then it is an elementary fact that the convex
body $P$ must be a polytope, and we may therefore assume henceforth
that any convex object $P$ that $k$-tiles is a polytope.


Minkowski \cite{Min} has shown that if a convex body $P$ tiles
$\R^d$ by a lattice, then it follows that $P$ is a centrally
symmetric polytope, with centrally symmetric facets.  Venkov \cite{Ven} and
McMullen  \cite{McM} proved that if a convex body $P$ tiles $\R^d$ by
translation, then for each of its codimension two faces $F$ there
are either four or six faces which are translates of $F$.

Here we find analogues of the necessary Minkowski conditions in the
case of general $k$-tilings, for any integer $k$ (see the main
Theorem \ref{thm_main} below).

Despite the beautiful characterization of $1$-tilers, given
collectively by Minkowski, Venkov, and McMullen, there is
still no known complete classification of polytopes that admit a
$k$-tiling, even in two dimensions.  However, it is known that in
$\R^2$, every $k$-tiling convex body has to be a centrally symmetric
polygon.  Also, there exists a characterization by Bolle~\cite{Bol}
of all {\it lattice} $k$-tilings of convex bodies in $\R^2$.
Kolountzakis \cite{Kol2} proved that every $k$-tiling of $\R^2$ by a
convex polygon $P$, which is not a parallelogram, is  a $k$-tiling
with a finite union of two-dimensional lattices.

\begin{figure}
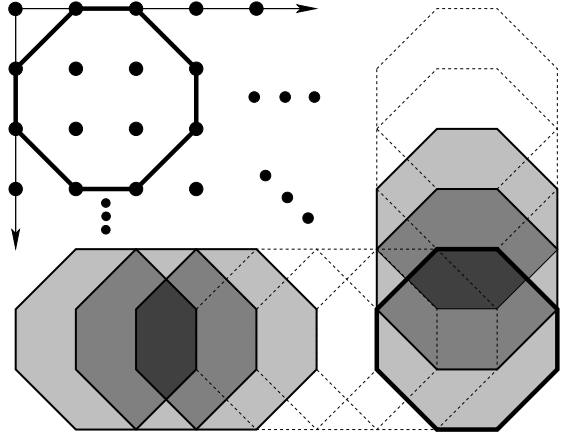

\begin{center}
\include{Figures/fig_8gon}
\end{center}
\caption{ An octagon that $7$-tiles, but does not $1$-tile. Here,
each point in the interior of the octagon is covered exactly $7$
times, once we translate the octagon by all of the integer
translation vectors.} \label{octagon}
\end{figure}

A {\bf parallelotope} is, by definition, a convex polytope that
tiles (i.e. $1$-tiles) $\R^d$ facet-to-facet, with a lattice.  That
is, its multiset of discrete translation vectors $\Lambda$ is in fact
given by a lattice in this case.   It was proved by McMullen that if
a polytope tiles $\R^d$  with a discrete multiset of translations
$\Lambda$, then it must also admit a facet-to-facet tiling with a
{\it lattice}.   In other words, McMullen showed that every
$1$-tiler must be a parallelotope.   A very active area of current research 
deals with the ``Voronoi conjecture'', which 

A {\bf zonotope} in $\R^d$ is a polytope which can be represented as
a Minkowski sum of finitely many line segments. Equivalently, a
zonotope is a polytope in $\R^d$ with the property that all of its
$k$-dimensional faces are centrally symmetric, for all $1 \leq k
\leq d$. For example, the zonotopes in $\R^2$ are the centrally
symmetric polygons.  A third equivalent definition for a zonotope is
that it is the projection of a $d$-dimensional cube, for some $d$.
For a good reference regarding these equivalences, and more about
polytopes, see the book by G. Ziegler \cite{Zie}.

It is clear that not all zonotopes are parallelotopes, an easy
example being furnished by the octagon (see fig.\,ref{octagon}) 
in two dimensions, which clearly does not tile by a lattice of translation vectors;
conversely, not all parallelotopes are zonotopes, as evidenced by
the example of the $24$-cell given below.  In fact, McMullen has
given a beatiful characterization of those parallelotopes which are
zonotopes, in terms of unimodular systems (see \cite{McM2} for more
details).

Very little is known about the precise classification of  polytopes
which $k$-tile $\R^d$ by translations.  We outline some specific
open questions in the last section that pertain to the current state
of affairs along these lines.  (see \cite{Gru}, pages 463-479 for
more details about $1$-tiling polytopes and some open problems).


\noindent We can now state the main result of this paper.
\begin{theorem}\label{thm_main}
If a convex polytope $k$-tiles $\R^d$ by translations, then it is
centrally symmetric and its facets are centrally symmetric.
\end{theorem}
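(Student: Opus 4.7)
My plan is to split the theorem into its two assertions — that $P$ is centrally symmetric, and that every facet of $P$ is centrally symmetric — and to attack both by analyzing the tiling identity $f(x) := \sum_{\lambda \in \Lambda} \mathbf{1}_P(x - \lambda) \equiv k$. Since $f$ is constant almost everywhere, every distributional derivative of $f$ vanishes; the derivative of $f$ in a direction $u \in S^{d-1}$ is a signed sum of $(d-1)$-dimensional surface measures supported on the facets of the translates $P+\lambda$ whose outer normal is parallel to $\pm u$, and this sum is $0$ as a tempered distribution. The overall strategy is to exploit this vanishing identity to transfer the $d$-dimensional tiling relation into an identity living on a hyperplane, and then induct on the dimension $d$.

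\smallskip
\noindent\textbf{Step 1: Facets are centrally symmetric.} Fix a facet $F$ of $P$ with outer unit normal $u$, and set $H_u := u^\perp$. The boundary identity $\partial_u f = 0$, collected along the $u$-direction, should reduce to a $(d-1)$-dimensional identity in $H_u$: the facet $F$, together with its antipodal partners carrying normal $-u$ appropriately shifted, $k'$-tiles $H_u$ for some positive integer $k'$, with a translation multiset derived from $\Lambda$. I then apply induction on $d$: the base case $d=2$ is precisely the known result recalled in the introduction that every convex polygon which $k$-tiles $\R^2$ is centrally symmetric, while the induction hypothesis applied to $F$ in dimension $d-1$ yields that $F$ is centrally symmetric.

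\smallskip
\noindent\textbf{Step 2: $P$ is centrally symmetric.} The reflected polytope $-P$ also $k$-tiles $\R^d$, using the translation multiset $-\Lambda$. By Step 1 applied to both $P$ and $-P$, every facet of $P$ and of $-P$ is centrally symmetric; in particular the facet of $P$ with normal $u$ is a translate of the facet of $-P$ with normal $u$. The pairing of facets delivered by the $(d-1)$-dimensional tiling identities of Step 1 then forces the surface area measures of $P$ and $-P$ on $S^{d-1}$ (the measures assigning to each $u$ the $(d-1)$-volume of the facet with outer normal $u$) to coincide. Minkowski's classical uniqueness theorem for convex bodies with prescribed surface area measure then gives $P = -P + c$ for some $c \in \R^d$, so $P$ is centrally symmetric.

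\smallskip
\noindent\textbf{Main obstacle.} The hard part is Step 1: rigorously establishing that the facet $F$ induces a $(d-1)$-dimensional $k'$-tiling of $H_u$ with $k' \ge 1$. The projection of $\Lambda$ onto $H_u$ is in general not a discrete multiset, and distinct facets of $P$ whose outer normals are not orthogonal to $u$ contribute boundary mass to every affine hyperplane parallel to $H_u$. One must therefore carefully collect the boundary contributions of all translates $P+\lambda$ whose facet with normal $u$ lies on a fixed slice, verify that the resulting multiplicity $k'$ is a positive integer independent of the slice, and ensure no cancellation pathology prevents the induction from being well-founded; this will likely require either a Fourier-analytic argument about $\widehat{\mathbf{1}_P}$ in the direction $u$, or a careful measure-theoretic accounting of the boundary of the tiling.
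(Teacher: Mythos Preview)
Your instinct to differentiate the tiling identity is sound and is close to the paper's Fourier-analytic Section~\ref{FourierSection}, but the plan breaks at exactly the point you label the ``main obstacle'', and the gap is structural rather than technical. From $\partial_u f=0$ (or, after isolating a facet pair, $(1_{F^+}-1_{F^-})*\delta_\Lambda=0$) you do \emph{not} obtain a $(d-1)$-dimensional $k'$-tiling of $H_u$ by translates of a single convex body $F$. What you obtain on each affine hyperplane orthogonal to $u$ is that a sum of translates of $F^+$ equals a sum of translates of $F^-$; these are two \emph{different} convex bodies until central symmetry of $P$ is already known, so the induction hypothesis simply does not apply. Consequently the sentence in Step~2 claiming that Step~1 ``forces the surface area measures of $P$ and $-P$ to coincide'' is unsupported: the equality $V^{d-1}(F^+)=V^{d-1}(F^-)$ is precisely what is missing, and it does not follow from any single-body $(d-1)$-tiling you have in hand. (A smaller error: the distributional derivative $\partial_u 1_P$ is supported on \emph{all} of $\partial P$, with density $-\langle u,n_G\rangle$ on each facet $G$, not only on facets with normal $\pm u$; isolating the pair $F^{\pm}$ is a separate step, cf.\ Lemma~\ref{FacetPairs}.)

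The paper avoids this circularity by reversing your order and never inducting on $d$. It first proves $V^{d-1}(F^+)=V^{d-1}(F^-)$ directly via a growth argument (Lemma~\ref{lm_integral}): the discrete identity $\sum_{\lambda}\partial_n 1_{P+v}(\lambda)=0$ from Lemma~\ref{lm_discrete} is summed over $\Lambda\cap B_R$, and a nonzero volume discrepancy would force $\#(\Lambda\cap B_R)$ to grow exponentially in $R$, contradicting the bound $\#(\Lambda\cap (P+v))=k$. Minkowski's uniqueness theorem then yields central symmetry of $P$. \emph{Only with $F^-$ a translate of $-F^+$ in hand} does the paper run the same volume argument one level deeper (orthogonal frame $(n_1,n_2)$) to get $V^{d-2}$-equalities among opposite codimension-$2$ faces of $F^+$, and a second application of Minkowski's theorem gives central symmetry of each facet. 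The key idea you are missing is this passage from the discrete cancellation identity to a continuous volume identity via the exponential-growth contradiction; without it, neither of your two steps can be completed.
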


We note that in $\R^3$ these two conditions are enough for a convex
body to necessarily be a zonotope. However, in dimension $4$ this is
no longer the case. A counterexample is furnished by the {\bf
$24$-cell}, which is a polytope in $\R^4$ which $1$-tiles $\R^4$, is
centrally symmetric, has centrally symmetric facets, but is not a
zonotope because it has $2$-dimensional faces that are triangles.
The $24$-cell is by definition the Voronoi region for the root
lattice $D_4$, and the reader may consult Coxeter \cite{Cox} for more
details.

Our proof of the main theorem above involves some new ideas that are quite different from Minkowski's proof for $1$-tilings. 
We also prove the following counter-part to the main Theorem above.

\begin{theorem}\label{thm_main2}
Every rational polytope $P$ that is centrally symmetric and has
centrally symmetric facets must necessarily $k$-tile $\R^d$ with a
lattice, for some positive integer $k$.

\medskip \noindent
Moreover, the polytope $P$ must $k$-tile $\R^d$ with the rational
lattice $\frac{1}{N}\Z^d$, where $N$ is the lcm of the denominators
of all the vertex coordinates of $P$.
\end{theorem}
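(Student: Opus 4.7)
The plan is to reduce the statement to a Fourier-analytic vanishing condition and verify it using the two symmetry hypotheses. With $L = \tfrac{1}{N}\Z^d$, the dual lattice is $L^{*} = N\Z^d$ and has covolume $|L| = N^{-d}$; Poisson summation applied distributionally to $\mathbf{1}_P$ identifies the Fourier coefficients of the periodization $g(x) := \sum_{\lambda\in L}\mathbf{1}_P(x+\lambda)$ with $N^d\,\widehat{\mathbf{1}_P}(\xi)$ for $\xi\in L^{*}$. Hence it suffices to show that $\widehat{\mathbf{1}_P}(\xi)=0$ for every $\xi\in N\Z^d\setminus\{0\}$: once that vanishing is established, $g$ equals the constant $N^d\operatorname{vol}(P)$ a.e., and since $g$ takes integer values at every point avoiding the (measure-zero) union of facet translates, this constant is automatically a positive integer $k$.

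To prove the vanishing I would apply the divergence theorem to express the Fourier transform as a sum over facets:
$$\widehat{\mathbf{1}_P}(\xi) \;=\; \frac{-1}{2\pi i\,|\xi|^{2}}\sum_{F\text{ facet}}(\xi\cdot n_F)\,\widehat{\chi}_F(\xi),\qquad \widehat{\chi}_F(\xi) := \int_F e^{-2\pi i\,\xi\cdot x}\,dS(x).$$
Central symmetry of $P$ pairs each facet $F$ with its opposite $F'=2c_P-F$, giving $n_{F'}=-n_F$ and, by a change of variables, $\widehat{\chi}_{F'}(\xi) = e^{-4\pi i\,\xi\cdot c_P}\,\overline{\widehat{\chi}_F(\xi)}$. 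Since every vertex of $P$ lies in $\tfrac{1}{N}\Z^d$ and $c_P$ is the midpoint of any two opposite vertices, $2c_P\in\tfrac{1}{N}\Z^d$, so for $\xi=Nm$ one has $2\xi\cdot c_P = m\cdot(2Nc_P)\in\Z$ and $e^{-4\pi i\,\xi\cdot c_P}=1$. Consequently the contribution of the pair $(F,F')$ collapses to $2i\,(\xi\cdot n_F)\,\operatorname{Im}\widehat{\chi}_F(\xi)$.

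The crux of the argument, and the only place where central symmetry of the facets is genuinely invoked, is to verify that $\widehat{\chi}_F(\xi)\in\R$ at every $\xi\in N\Z^d$. Letting $c_F$ denote the center of the centrally symmetric facet $F$, the shifted facet $F-c_F$ is symmetric about the origin, so $\widehat{\chi}_{F-c_F}(\xi)$ is real and $\widehat{\chi}_F(\xi)=e^{-2\pi i\,\xi\cdot c_F}\,\widehat{\chi}_{F-c_F}(\xi)$. But $c_F$ is the midpoint of any two opposite vertices of $F$, which are vertices of $P$ and hence lie in $\tfrac{1}{N}\Z^d$, so $2c_F\in\tfrac{1}{N}\Z^d$ and $\xi\cdot c_F\in\tfrac{1}{2}\Z$ for $\xi\in N\Z^d$. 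Thus $e^{-2\pi i\,\xi\cdot c_F}\in\{\pm 1\}$, which forces $\widehat{\chi}_F(\xi)$ to be real, every paired contribution to vanish, and $\widehat{\mathbf{1}_P}(Nm)=0$ as required.

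The hardest part is this last step: it is precisely where the two hypotheses combine nontrivially. Central symmetry of each facet is used to replace the facet's centroid, which a priori could have denominators much larger than $2N$, by the center of symmetry, which by construction is the midpoint of two rational vertices and therefore belongs to $\tfrac{1}{2N}\Z^d$. All other ingredients (Poisson summation, the divergence theorem, and the facet pairing) are standard once this framework is in place; the rationality hypothesis appears only in order to pin down the correct lattice on which $\widehat{\mathbf{1}_P}$ can be forced to vanish.
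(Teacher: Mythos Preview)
Your argument is correct. The key computation — that central symmetry of $P$ pairs facets so that their combined contribution is $2i(\xi\cdot n_F)\operatorname{Im}\widehat{\chi}_F(\xi)$, and that central symmetry of each facet forces $\widehat{\chi}_F(\xi)\in\R$ because $2c_F\in\tfrac{1}{N}\Z^d$ — is sound, and the passage from $\widehat{\mathbf 1}_P|_{N\Z^d\setminus\{0\}}\equiv 0$ to $k$-tiling via Poisson/periodization is legitimate for the bounded compactly supported function $\mathbf 1_P$.

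The paper, however, does something quite different and more elementary: it rescales so that $P$ has integer vertices and argues directly that $\#\big(\Z^d\cap(P+v)\big)$ is independent of $v$ in general position. The crucial geometric observation is that central symmetry of $P$ together with central symmetry of each facet $F$ forces the opposite facet $F'$ to be an \emph{integer translate} of $F$ (since $F'=2(c_P-c_F)+F$ and $2c_P,\,2c_F\in\Z^d$); thus opposite facets always carry the same number of lattice points. One then slides $P$ along a path avoiding codimension-$2$ incidences, so that every lattice point entering through a facet is matched by one exiting through the opposite facet, keeping the count constant. This combined with Lemma~\ref{lm_restate} gives the tiling. Your Fourier route and the paper's path argument ultimately rest on the same arithmetic fact about $2c_P$ and $2c_F$; your approach additionally yields the explicit value $k=N^d\operatorname{vol}(P)$, while the paper's proof stays entirely within discrete geometry and requires no analysis. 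It is worth noting that the paper does develop Fourier machinery (Theorem~\ref{Stokes}), but applies it only to the necessity direction (Theorem~\ref{thm_main}), not to the sufficiency result you are proving here.
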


The paper is organized as follows. Section \ref{MAIN} is devoted to
the proof of the main result, namely Theorem \ref{thm_main}, and
comprises the main body of the paper. Section \ref{second main theorem} is short, and is devoted to the proof of Theorem
\ref{thm_main2}. In section \ref{FourierSection} we provide a more analytic approach of the main result, using Fourier techniques. Although it is
not crucial to supply another proof of the main result, this
approach provides a Fourier  lens through which we can view our results.  

Kolountzakis has also studied this problem using the
Fourier approach, and indeed our Fourier approach borrows some
techniques from his work. In section \ref{solid-angles} we give
another necessary and sufficient condition for a polytope to
$k$-tile $\R^d$, this time in terms of the solid angles of the
vertices of $P$. Finally, in section \ref{open}  we mention some of
the important open problems concerning polytopes that $k$-tile
$\R^d$.

\section{Definitions and preliminaries}

We adopt the usual conventions and notation from combinatorial
geometry.  First, we recall that the {\bf Minkowski sum} of two
multisets $A \subset \R^d$ and $B \subset \R^d$ is the set
$A+B=\{a+b: a\in A, b\in B\}$, and that the Minkowski difference is
defined similarly by $A-B=\{a-b: a\in A, b\in B\}$.

For any set $A \subset \R^d$, its { \bf opposite set}  is defined as
$-1 \cdot A=\{-a: a\in A\}$.  We are particularly interested in the
case that both $A$ and $B$ are polytopes.  We are also keenly
interested in the case that $A$ is a polytope and $B$ is a discrete
set of vectors, so that here $A+B$ is a set of translated copies of
the polytope $A$.

Given a convex body $P\subseteq\R^d$, $ \partial P$ denotes the
boundary of $P$. The standard convention for $\partial P$ includes
the fact that it has ($d$-dimensional) Lebesgue measure $0$, with
respect to the Lebesgue measure of $\R^d$. We let the  interior of a
body $P$ be denoted by $\textup{Int}(P)$.  Throughout the paper,
$\Lambda$ denotes an infinite discrete multiset of vectors in $\R^d$,
which is not necessarily a lattice.

We say that body $P$ {\bf $k$-tiles} $\R^d$ with the discrete
multiset $\Lambda$, if after translating $P$ by each vector
$\lambda\in\Lambda$, almost every point of $\R^d$ (except for the
boundary points of translated copies of $P$) is covered by exactly
$k$ of these translated copies of $P$. This condition can be written
more concisely as follows:
$$
\sum\limits_{\lambda\in\Lambda}1_{P+\lambda}(v)=k,
$$
for all ${v}\notin \partial P+\Lambda$.

We also recall that a  { \bf facet} of a $d$-dimensional polytope is
any one of its $(d-1)$-dimensional faces. We let $V^{k}(F)$ denote
the $k$-dimensional volume of a $k$-dimensional object $F$, even if
$F$ resides in a higher dimensional ambient space, and sometimes we
simply write $V(F)$ for the $d$-dimensional volume of a
$d$-dimensional object $F \subset \R^d$. Finally,  $\#(A)$ denotes
the cardinality of any finite multiset $A$.


\section{Proof of Theorem \ref{thm_main}} \label{MAIN}

To simplify the ensuing notation, we will assume that $-1 \cdot P$
$k$-tiles $\R^d$.   We do not lose any generality, because $-1 \cdot
P$ $k$-tiles $\R^d$ if and only if $P$ also $k$-tiles $\R^d$.

We say that $v\in \R^d$ is in {\it general position} if there are no
points of $\Lambda$ on the boundary of $P+v$.  In other words, $v
\notin \Lambda-\partial P$. We first prove the following elementary
but useful lemma, giving an equivalent condition for $k$-tiling in
terms of the number of $\Lambda$-points that lie in a 'typical'
translate of $P$.

\begin{lemma}\label{lm_restate}
A convex polytope $-1 \cdot P$ $k$-tiles $\R^d$ by translations with
a multiset $\Lambda$
if and only if\\
 $\#(\Lambda\cap\{P+v\})=k$ for every $v$ in general position.
\end{lemma}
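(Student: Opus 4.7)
The plan is to show that both sides of the equivalence are simply the same statement, rewritten via a change of variable that converts containment of the translated body $(-P)+\lambda$ around a point $v$ into containment of $\lambda$ in the translated body $P+v$.

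First I would unwind the $k$-tiling condition. By the definition stated just above the lemma, $-1\cdot P$ $k$-tiles $\R^d$ with $\Lambda$ precisely when
\[
\sum_{\lambda\in\Lambda}1_{(-P)+\lambda}(v)=k
\qquad\text{for every } v\notin \partial(-P)+\Lambda.
\]
The key step is the observation that $1_{(-P)+\lambda}(v)=1$ iff $v-\lambda\in -P$, which is equivalent to $\lambda-v\in P$, i.e.\ $\lambda\in P+v$. Therefore, for any fixed $v$,
\[
\sum_{\lambda\in\Lambda}1_{(-P)+\lambda}(v)=\#\bigl(\Lambda\cap(P+v)\bigr).
\]

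Next I would check that the exceptional sets in the two formulations match. A point $v$ lies in $\partial(-P)+\Lambda$ iff there exists $\lambda\in\Lambda$ with $v-\lambda\in\partial(-P)$, equivalently $\lambda-v\in\partial P$, equivalently $\lambda\in\partial(P+v)$. So the condition "$v\notin\partial(-P)+\Lambda$" is literally the condition that no element of $\Lambda$ lies on $\partial(P+v)$, which is exactly the definition of $v$ being in general position.

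Combining these two observations, the $k$-tiling condition becomes $\#(\Lambda\cap(P+v))=k$ for every $v$ in general position, giving both implications at once. There is no real obstacle here; the content is entirely definition-chasing, and the only thing to be careful about is that "in general position" is defined in terms of $P$ (not $-P$), so one should verify the sign conventions match, which the computation above does. Since $\Lambda$ is a discrete multiset and $\partial P$ has measure zero, the set of $v$ in general position has full measure, so the quantified condition is non-vacuous and matches the almost-everywhere requirement in the definition of $k$-tiling.
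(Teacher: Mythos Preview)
Your proof is correct and follows essentially the same approach as the paper: both unwind the identity $1_{(-P)+\lambda}(v)=1_{P+v}(\lambda)$ to convert the tiling sum into the count $\#(\Lambda\cap(P+v))$, and both verify that the exceptional set $\partial(-P)+\Lambda$ coincides with the set of $v$ not in general position. Your write-up is simply a more detailed version of the paper's two-line computation.
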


\begin{proof} Suppose that $-1 \cdot P$ $k$-tiles $\R^d$. Then for every
$v\notin \partial(-1 \cdot P)+\Lambda$ we can write
$$
k=\sum\limits_{\lambda\in\Lambda}1_{ \{-1 \cdot P+\lambda \} }(v)
=\sum\limits_{\lambda\in\Lambda}1_{P+v}(\lambda)=\#(\Lambda\cap\{P+v\}).
$$
It remains to mention that $\partial(-1 \cdot
P)+\Lambda=\Lambda-\partial P$. The proof in the other direction is
identical.
\end{proof}

\bigskip
We need to introduce some useful and natural notation for the
theorems that follow. Let $\GP$ be the vector space of the real
linear combinations of indicator functions of all convex polytopes
in $\R^d$. Thus, for example, if $P$ is any convex $k$-dimensional
polytope and $Q$ is any convex $m$-dimensional polytope, then
$\frac{1}{3} \cdot 1_P - 2 \cdot 1_Q \in \GP$.

One of the most important operators for us is the following boundary
operator, with respect to a vector $n\in\R^d$.   It is the function
$\partial_n:\GP\rightarrow\GP$, defined as follows:
\[
\partial_{n}1_P=1_{F^{+}}-1_{F^{-}},
\]
where $F^{+}$ and $F^{-}$ are the (possibly degenerate) facets of
$P$ with outward pointing normals $n$ and $-n$, respectively.   It
is a standard vector space verification that this operation is also
well-defined on $\GP$.

We also define this boundary operator on all of $\GP$, by letting it
act as a linear operator on the linear combinations of indicator
functions of polytopes.   For example, another iteration of this
operator on $P \subset \R^3$  yields $\partial_{n_2} (\partial_{n_1}
P) =  \partial_{n_2}( 1_{F^+}-1_{F^-}   )  = (1_{E_1} - 1_{E_2})
-(1_{E_3} - 1_{E_4})$, where $E_1, E_2$ are the edges (which are by
definition the $1$-dim'l faces) of $F^+$, and $E_3, E_4$ are the
edges of $F^-$.  In this case, each of the four edges is orthogonal
to both of the vectors $n_1$ and $n_2$, as is seen in Figure
$\ref{boundary.operator}$ below.

\begin{figure}
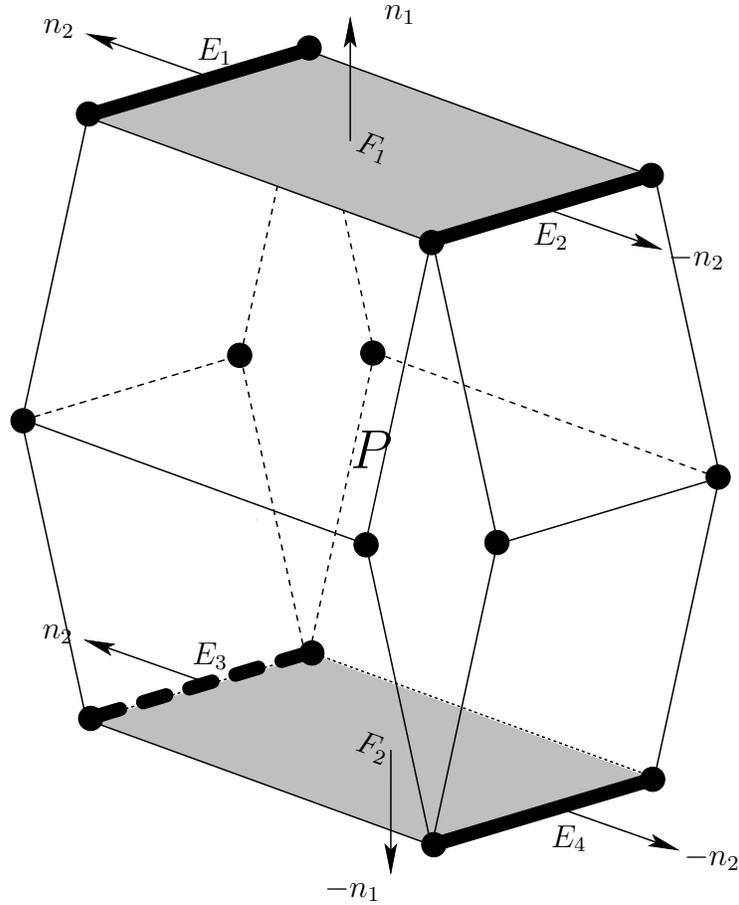

\begin{center}
\include{Figures/fig_2normals}
\caption{ The boundary operator with respect to $n_1$ picks out the
two facets $F^+$ and $F^-$, illustrating the definition of
$\partial_{n}1_P=1_{F^{+}}-1_{F^{-}}$.   A second iteration of the
boundary operator, this time with respect to $n_2$, picks out the
four edge vectors $E_1, E_2, E_3$, and $E_4$, thus visually
illustrating the identity $
\partial_{n_2} (\partial_{n_1} P) =  \partial_{n_2}( 1_{F^+}-1_{F^-}   )  = (1_{E_1} - 1_{E_2}) -(1_{E_3} - 1_{E_4}).
$ } \label{boundary.operator}
\end{center}
\end{figure}

For the sake of convenience, we also define the action of the
boundary operator $\partial_n$ on convex polytopes $P$ as follows:
\[
\partial_{n}P=supp(\partial_{n}1_P)=\{v\in\R^d|\partial_{n}1_P(v)\neq 0\},
\]
so that the same symbol now acts on the subset $P$.  However, we
note that the more salient operator for our discussions is still
$\partial_{n}1_P $.
 It is useful to
utilize both of these actions, the first being an action on
indicator functions, and the second being an action on subsets of
points $P \subset \R^d$.

We call a sequence $\Gn=(n_1,\ldots,n_m)$ of vectors in $\R^d$ an
orthogonal frame if they are pairwise orthogonal to each other.  We
denote it by $\Gn^{\bot}$ the subspace of $\R^d$ consisting of those
vectors which are orthogonal to every vector in the orthogonal frame
$\Gn$.

We define $\partial_{\Gn} := \partial_{n_m}\ldots\partial_{n_1}$, a
composition of boundary operators that is read from right to left.
In case $m=0$, when an orthogonal frame $\Gn$ is empty,
we define $\partial_{\Gn}$ to be an identity operator. Similarly to $\partial_{n}P$ we define a boundary operator
relative to  a whole frame $\Gn=(n_1,\ldots,n_m)$:

\[
\partial_{\Gn}P=supp(\partial_{\Gn}1_P)=\{v\in\R^d|\partial_{\Gn}1_P(v)\neq 0\}.
\]

Note that all the faces whose indicator functions appear in
$\partial_{\Gn}1_P$ must have codimension $m$, must be parallel to
each other, and must have outward pointing normals $n_m$ or $-n_m$
in $\partial_{n_{m-1}}\ldots\partial_{n_1}P$.

We can now separate $\partial_{\Gn}$ into two parts:
$\partial_{\Gn}^{+}$ and $\partial_{\Gn}^{-}$, corresponding to
faces with outward normals $n_m$ or $-n_m$, so that
$\partial_{\Gn}1_P = \partial_{\Gn}^{+}1_P - \partial_{\Gn}^{-}1_P$.
In other words, if $\partial_n 1_P=1_{F^{+}}-1_{F^{-}}$, then by
definition $\partial_n^{+}1_P=1_{F^{+}}$, and
$\partial_n^{-}1_P=1_{F^{-}}$.

We say that $v \in \R^d$ is in general position w.r.t. the
orthogonal frame $\Gn$, if there are no points of $\Lambda$ on any
boundary component of $\partial_{\Gn}(P+v)$. A more formal
description which we will have occasion to use below is that
$v\notin \Lambda-\partial\partial_{\Gn}P$.

Even though we only need to consider orthogonal frames of size
at most two in order to prove the theorem~\ref{thm_main}, we will prove
two following lemmas in general case, for an orthogonal frame of any size.

\begin{lemma}\label{lm_discrete}
Suppose $\#(\Lambda\cap\{P+v\})=k$ for every $v$ in general
position. Let $\Gn=(n_1,\ldots,n_m)$ be an orthogonal frame in
$\R^d$. Then for any $v$ in general position w.r.t $\Gn$ the
following formula holds:
\begin{equation}
\sum\limits_{\lambda\in\Lambda}\partial_{\Gn}1_{P+v}(\lambda)=0.
\label{fml_discrete}
\end{equation}
\end{lemma}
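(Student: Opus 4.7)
The plan is to prove the identity by induction on the size $m$ of the orthogonal frame $\Gn$.

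For the base case $m = 1$, with $\Gn = (n_1)$ and $v$ in general position with respect to $(n_1)$, the goal is to show $\#(\Lambda\cap(F^+ + v)) = \#(\Lambda\cap(F^- + v))$. My approach is to use Lemma~\ref{lm_restate}: $N(u) := \#(\Lambda\cap(P + u)) = k$ for every $u$ in full general position. I would perturb $v$ to $u := v + \delta w$ for a small generic $w$ so that $u$ is in full general position, and then compare $N(u)$ to $N(u + \epsilon n_1)$ for small generic $\epsilon > 0$ (also in full general position). Decomposing the difference by tracking which $\Lambda$-points transition in or out of $P + u + t n_1$ as $t$ varies from $0$ to $\epsilon$ and taking appropriate limits $\delta, \epsilon \to 0$, the net contribution coming from the $n_1$-perpendicular facets $F^\pm + v$ yields the desired identity; contributions from other (lateral) facets of $P + v$ must then be shown to cancel.

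For the inductive step, assume the lemma for every frame of size $m - 1$. Let $\Gn' = (n_1, \ldots, n_{m-1})$ and $\Gn = (n_1, \ldots, n_m)$. By the inductive hypothesis, the integer-valued function
$$g(u) := \sum_{\lambda \in \Lambda} \partial_{\Gn'} 1_{P + u}(\lambda)$$
vanishes identically on the open dense set of $u$'s in general position with respect to $\Gn'$. Since $\partial_{\Gn'} 1_P$ is a signed sum of indicator functions of codimension-$(m-1)$ faces of $P$—each itself a $(d - m + 1)$-dimensional polytope equipped with its own $\pm n_m$-facets (which are precisely the codimension-$m$ faces of $P$ appearing in $\partial_\Gn 1_P$)—one applies the base-case sliding argument at this one-higher codimension. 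The local $n_m$-constancy of $g$ on the set of admissible $u$'s, together with the base-case reasoning applied face-wise to each $(d-m+1)$-dimensional summand, forces $\sum_\lambda \partial_\Gn 1_{P + v}(\lambda) = 0$ for every $v$ in general position with respect to $\Gn$.

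The main obstacle is the base case. A naive sliding comparison in the $n_1$-direction only produces the composite identity
$$\#(\Lambda \cap (F^+ + v)) + \#\{\text{$\Lambda$ on up-facing lateral facets of $P + v$}\} = \#(\Lambda \cap (F^- + v)) + \#\{\text{$\Lambda$ on down-facing lateral facets}\},$$
rather than the clean statement $\#(F^+ + v) = \#(F^- + v)$. Isolating the $F^\pm$-contribution from the lateral ones will require a more delicate perturbation scheme—likely combining several generic perturbation directions so that the lateral terms cancel pairwise—and crucially uses the hypothesis that $v$ is in general position with respect to $\Gn = (n_1)$, which ensures no $\Lambda$-point lies on $\partial F^\pm + v$ and hence legitimizes the limiting arguments. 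In the inductive step, analogous care is needed to verify that the general-position set with respect to $\Gn'$ is open and dense, so that the auxiliary function $g$ is defined on a neighborhood of $v$ and the $n_m$-translation analysis is valid.
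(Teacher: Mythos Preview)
Your inductive architecture matches the paper's, and you have correctly isolated the real difficulty: when you slide $P+v$ in the $n_m$-direction, $\Lambda$-points can cross lateral facets, not just the $\pm n_m$-facets, so the naive comparison does not isolate the desired quantity. However, you leave this as an acknowledged gap (``will require a more delicate perturbation scheme\ldots so that the lateral terms cancel pairwise''), and the scheme you gesture at---pairing lateral contributions---is not what actually works.

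The missing idea, which the paper supplies, is a \emph{preliminary perturbation in the direction of $\Gn^\perp$}. Before sliding in $\pm n_m$, first replace $v$ by $v_{\epsilon'}=v+\epsilon' w$ with $w\in\Gn^\perp$ generic and $\epsilon'$ small. Two things happen simultaneously:
\begin{itemize}
\item Every face appearing in $\partial_\Gn P$ is parallel to $\Gn^\perp$, so this perturbation translates each such face within its own affine span; since $v$ was in general position w.r.t.\ $\Gn$, the $\Lambda$-points on $\partial_\Gn(P+v)$ lie in the relative interiors of these faces and stay there. Hence $\sum_\lambda\partial_\Gn 1_{P+v}(\lambda)$ is unchanged.
\item Every \emph{lateral} facet of a face in $\partial_{\Gn'}P$ has a normal (within the affine span of that face) with a nonzero $\Gn^\perp$-component, so a generic $\Gn^\perp$-perturbation pushes all $\Lambda$-points off those lateral facets.
\end{itemize}
Now the $\pm\epsilon n_m$ slide from $v_{\epsilon'}$ crosses \emph{only} the $\pm n_m$-facets, and the comparison of $\sum_\lambda\partial_{\Gn'}1_{P+v_{\epsilon'}^+}(\lambda)$ with $\sum_\lambda\partial_{\Gn'}1_{P+v_{\epsilon'}^-}(\lambda)$ (equal by induction) yields exactly $\sum_\lambda\partial_{n_m}^+\partial_{\Gn'}1_{P+v}(\lambda)=\sum_\lambda\partial_{n_m}^-\partial_{\Gn'}1_{P+v}(\lambda)$.

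A minor structural point: the paper takes $m=0$ as the base (where the sum is the constant $k$), so the inductive hypothesis is ``the sum is constant,'' not ``the sum is zero.'' Your version, with base $m=1$, forces you to prove the hardest case from scratch rather than as the first instance of the inductive step; once you have the $\Gn^\perp$-perturbation idea, the uniform induction from $m=0$ is cleaner.
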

\begin{proof}
We proceed by induction on $m$. We remark that for $m=0$ the hypothesis tells us that
$\sum\limits_{\lambda\in\Lambda}\partial_{\Gn}1_{P+v}(\lambda)=k$, and for $m=0$ this operator is by definition the identity operator.
However, for each
 $m\geq 1$, we will show that $\sum\limits_{\lambda\in\Lambda}\partial_{\Gn}1_{P+v}(\lambda)=0$.



Suppose that for an $(m-1)$-dimensional orthogonal frame
$\Gn^{\prime}=(n_1,\ldots,n_{m-1})$ and for every $v$ in general
position w.r.t. $\Gn^{\prime}$ the formula holds:
$$\sum\limits_{\lambda\in\Lambda}\partial_{\Gn^{\prime}}1_{P+v}(\lambda)=const$$

Now consider any $m$-dimensional orthogonal frame
$\Gn=(n_1,\ldots,n_m)$, and $v$ in general position w.r.t $\Gn$. We
know that all $\Lambda$-points of $v+\partial_{\Gn}P$ lie in
$v+Int(\partial_{\Gn}P)$. Therefore, one can pick sufficiently small
$\epsilon'$, such that no $\epsilon'$-perturbation of $v$ by a
vector in $\Gn^{\bot}$ removes or adds any $\Lambda$-points to
$v+\partial_{\Gn}P$. Clearly, by doing so we do not change
$\sum\limits_{\lambda\in\Lambda}\partial_{\Gn}1_{P+v}(\lambda)$. On
the other hand, we may choose an $\epsilon'$-perturbation,
$v_{\epsilon'}$, such that all $\Lambda$-points in
$v_{\epsilon'}+\partial\partial_{\Gn^{\prime}}P$ get either inside
or outside of $v_{\epsilon'}+\partial_{\Gn^{\prime}}P$ (see
fig.\,\ref{fig:boundary1}).


\begin{figure}
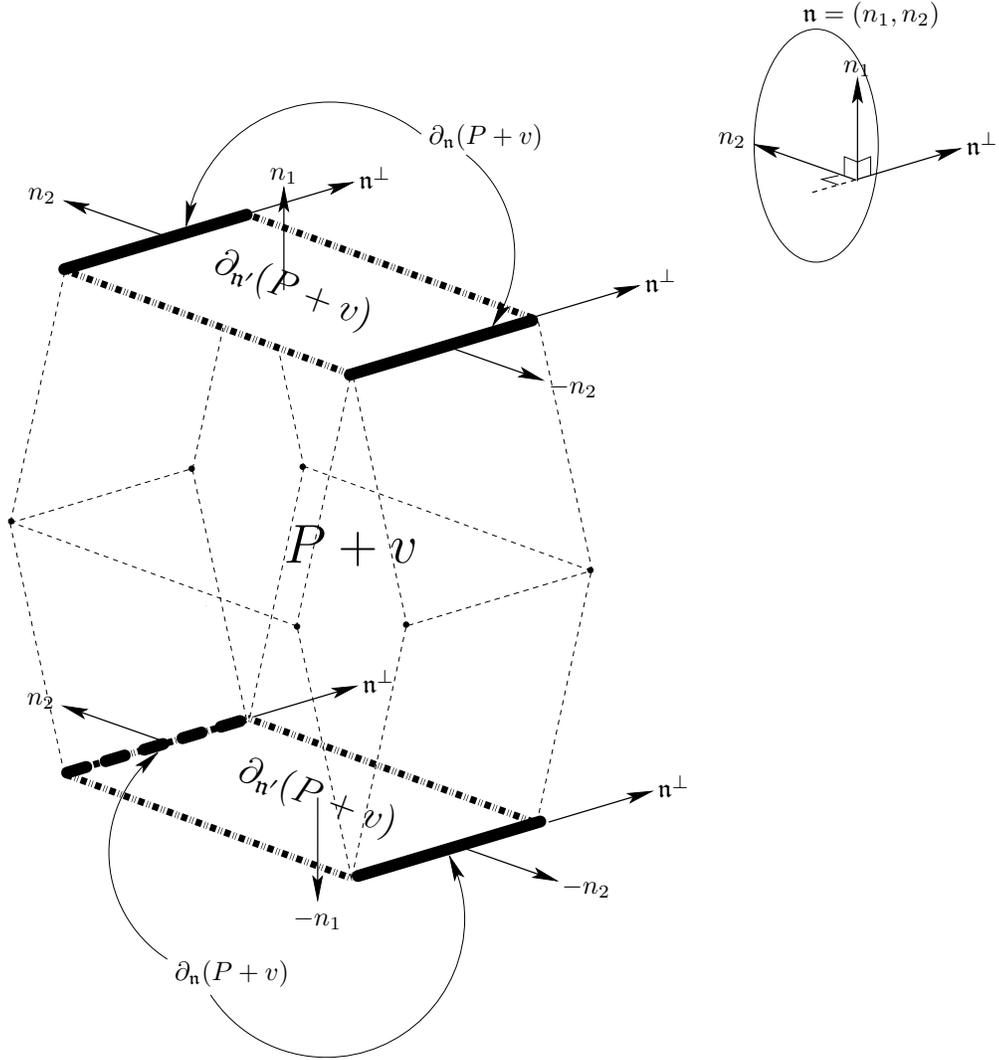

\begin{center}
\include{Figures/fig_boundary1}
\end{center}
\caption{ The $\epsilon'$ perturbation, along the $n^{\bot}$ direction, insures that all $\Lambda$ points have been removed from the four dotted edges on the upper facet and lower facet of $P + v$, giving us the set  $\partial_{\Gn^{\prime}}(P+v)$.    Also, the $\epsilon$ perturbation, along the 
$n_2$ direction, insures that all $\Lambda$ points on the right-hand bold edges, attached to the normal vector $-n_2$, will end up outside of the perturbed set, and that all $\Lambda$ points on the left-hand bold edges, attached to the normal vector $n_2$, will end up inside the perturbed set.   } 
\label{fig:boundary1}
\end{figure}

Then consider two small perturbations of $v_{\epsilon'}$ in the directions $n_m$ and
$-n_m$: $v_{\epsilon'}^{+}=v_{\epsilon'}+\epsilon n_m$ and $v_{\epsilon'}^{-}=v_{\epsilon'}-\epsilon n_m$, such that
$v_{\epsilon'}^{+}$ and $v_{\epsilon'}^{-}$ are in general position w.r.t $\Gn^{\prime}$,
and $\epsilon$ small enough so that there are no points of $\Lambda$
that lie in $P+v_{\epsilon'}^{\pm}$ and do not lie in $P+v_{\epsilon'}$ (such an $\epsilon$ can
be found, because $\Lambda$ is discrete).

By induction,
$\sum\limits_{\lambda\in\Lambda}\partial_{\Gn^{\prime}}1_{P+v_{\epsilon'}^{+}}(\lambda)=const=
\sum\limits_{\lambda\in\Lambda}\partial_{\Gn^{\prime}}1_{P+v_{\epsilon'}^{-}}(\lambda)$.

On the other hand, recalling that by definition $\partial_{\Gn}P=\partial_{n_m}\partial_{\Gn'}P$,
\begin{align}
\sum\limits_{\lambda\in\Lambda}\partial_{\Gn^{\prime}}1_{P+v_{\epsilon'}^{+}}(\lambda)-
\sum\limits_{\lambda\in\Lambda}\partial_{n_m}^{+}\partial_{\Gn'} 1_{P+v_{\epsilon'}}(\lambda)
&=
\sum\limits_{\lambda\in\Lambda}\partial_{\Gn^{\prime}}1_{P+v_{\epsilon'}}(\lambda)\cdot1_{Int(\partial_{\Gn^{\prime}}P)}(\lambda) \\
&=
\sum\limits_{\lambda\in\Lambda}\partial_{\Gn^{\prime}}1_{P+v_{\epsilon'}^{-}}(\lambda)-
\sum\limits_{\lambda\in\Lambda}\partial_{n_m}^{-}\partial_{\Gn'} 1_{P+v_{\epsilon'}}(\lambda).
\end{align}

It follows that $\sum\limits_{\lambda\in\Lambda}\partial_{n_m}^{+}\partial_{\Gn'}
1_{P+v}(\lambda)=\sum\limits_{\lambda\in\Lambda}\partial_{n_m}^{+}\partial_{\Gn'}
1_{P+v_{\epsilon'}}(\lambda)= \sum\limits_{\lambda\in\Lambda}\partial_{n_m}^{-}\partial_{\Gn'}
1_{P+v_{\epsilon'}}(\lambda)= \sum\limits_{\lambda\in\Lambda}\partial_{n_m}^{-}\partial_{\Gn'}
1_{P+v}(\lambda)$, which gives us (\ref{fml_discrete}), since
$\partial_{\Gn}=\partial_{n_m}^{+}\partial_{\Gn'}-\partial_{n_m}^{-}\partial_{\Gn'}$.
\end{proof}




For any polytope in $\R^d$ lying in an affine subspace parallel to
$\Gn^{\bot}$, we may consider a naturally defined lower dimensional
volume $\text{Vol}_{\Gn}$.  For example, if $d=3$ and $\Gn=(n_1, n_2)$, we
get $\text{Vol}_{\Gn}$ to be just a length of a line segment in $\R^3$. As
we know $\partial_{\Gn}1_P$ is a finite sum of indicator functions of
polytopes lying in affine subspaces parallel to $\Gn^{\bot}$ taken
with $+$ or $-$ signs. For each such indicator function $1_F$ let us denote
by $\text{Vol}_{\Gn}(1_F)$ the volume of polytope $F$. Note that we can
take any measurable object in the affine subspace parallel to
$\Gn^{\bot}$ instead of $F$. 

We now extend the notion of $\text{Vol}(S)$ to a more general notion of a signed linear combination of volumes.
We let $V_{\Gn}(\partial_{\Gn}1_P)$ denote the sum of the corresponding
volumes taken with different signs, and in a similar way we can write
$V_{\Gn}$ for any sum of positive and negative indicator
functions.  The next Lemma extends equality (\ref{fml_discrete}) in  Lemma \ref{lm_discrete} from a discrete measure of facets to a continuous measure of facets.

\begin{lemma}\label{lm_integral}
Under the same assumptions of lemma~\ref{lm_discrete}, the following
formula holds:
$$  V_\Gn( \partial_{\Gn}1_P) =0.$$
\end{lemma}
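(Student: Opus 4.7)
The strategy is to promote the pointwise identity of Lemma~\ref{lm_discrete} to a vanishing statement for a signed $(d-m)$-dimensional measure on $\R^d$, and then extract the claimed volume identity by comparing that measure's total mass on a large ball in two different ways.

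Expand $\partial_{\Gn}1_P=\sum_i \epsilon_i\,1_{F_i}$, with $\epsilon_i\in\{\pm 1\}$ and the $F_i$'s the codimension-$m$ faces appearing in $\partial_{\Gn}P$ (all parallel to $\Gn^{\bot}$), so that $V_{\Gn}(\partial_{\Gn}1_P)=\sum_i \epsilon_i V_{\Gn}(F_i)$ is the quantity we must show vanishes. Reinterpret each $1_{F_i}$ as the $(d-m)$-dimensional volume measure supported on $F_i$ (whose total mass is $V_{\Gn}(F_i)$), and form the signed $(d-m)$-dimensional measure on $\R^d$
\[
\nu_{\Lambda}\;:=\;\sum_{\lambda\in\Lambda}\sum_i \epsilon_i\, V_{\Gn}|_{\lambda-F_i}.
\]
At a point $v$ in general position with respect to $\Gn$, a pair $(\lambda,i)$ contributes to the local density of $\nu_{\Lambda}$ at $v$ precisely when $v\in\lambda-F_i$, equivalently when $\lambda\in F_i+v$; summing with signs,
\[
\text{(local density of }\nu_{\Lambda}\text{ at }v\text{)}\;=\;\sum_i \epsilon_i\,\#\bigl(\Lambda\cap(F_i+v)\bigr)\;=\;\sum_{\lambda\in\Lambda}\partial_{\Gn}1_{P+v}(\lambda),
\]
which vanishes by Lemma~\ref{lm_discrete}. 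The non-generic $v$'s lie on the $(d-m-1)$-dimensional set $\Lambda-\partial\partial_{\Gn}P$, hence carry zero mass inside the $(d-m)$-dimensional support of $\nu_{\Lambda}$; therefore $\nu_{\Lambda}$ vanishes identically as a signed measure.

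Now take a ball $B_R\subset\R^d$ of radius $R$. On the one hand, $\nu_{\Lambda}(B_R)=0$. On the other hand,
\[
\nu_{\Lambda}(B_R)\;=\;\sum_{\lambda,i}\epsilon_i\,V_{\Gn}\bigl((\lambda-F_i)\cap B_R\bigr).
\]
Every $\lambda\in\Lambda$ for which $\lambda-F_i\subset B_R$ contributes the full $\epsilon_i V_{\Gn}(F_i)$, while the remaining $\lambda$'s lie within a bounded distance of $\partial B_R$ and contribute $O(R^{d-1})$ in total. Integrating the identity of Lemma~\ref{lm_restate} over $B_R$ yields the density estimate $\#(\Lambda\cap B_R)=(k/V(P))\,V(B_R)+O(R^{d-1})$, and combining these gives
\[
0\;=\;\nu_{\Lambda}(B_R)\;=\;\frac{k}{V(P)}\,V(B_R)\cdot V_{\Gn}(\partial_{\Gn}1_P)+O(R^{d-1}).
\]
Dividing by $V(B_R)\sim R^d$ and sending $R\to\infty$ forces $V_{\Gn}(\partial_{\Gn}1_P)=0$.

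The principal technical hurdle is to upgrade the pointwise identity of Lemma~\ref{lm_discrete}, which on the face of it holds only off a Lebesgue-null set of $\R^d$, to the measure-theoretic vanishing $\nu_{\Lambda}\equiv 0$; the key is that the exceptional set is in fact a proper $(d-m-1)$-dimensional subvariety of the $(d-m)$-dimensional support of $\nu_{\Lambda}$, which has $V_{\Gn}$-measure zero there and so may be neglected.
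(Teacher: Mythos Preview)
Your argument is correct. The overall setup matches the paper's: both translate the pointwise identity of Lemma~\ref{lm_discrete} into a statement about the signed $(d-m)$-volume carried by $\sum_{\lambda}\partial_{\Gn}1_{\lambda-P}$ over a large ball $B_R$, and both separate the contributions into ``interior'' $\lambda$'s (each contributing the full $V_{\Gn}(\partial_{\Gn}1_P)$) and ``annular'' $\lambda$'s near $\partial B_R$. The genuine difference is in the endgame. The paper argues by contradiction: assuming $A_1:=V_{\Gn}(\partial_{\Gn}1_P)\neq 0$, it extracts from the same decomposition the multiplicative inequality $(1-|A_1|/A_2)\,N(R+C)\ge N(R-C)$, forcing $N(R)$ to grow exponentially, and then obtains a contradiction via a pigeonhole argument (some small cube, hence some translate of $P$, would contain more than $k$ points of $\Lambda$). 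You instead first establish the density estimate $N(R)=(k/V(P))V(B_R)+O(R^{d-1})$ by integrating Lemma~\ref{lm_restate} over $B_R$; this bounds the annular term directly by $O(R^{d-1})$, and the conclusion follows immediately by dividing through by $R^d$. Your route is more direct and avoids the exponential-growth detour entirely; the paper's route avoids appealing to the quantitative density of $\Lambda$ and uses only the qualitative bound $\#(\Lambda\cap(P+v))\le k$. Both are valid, and your handling of the ``technical hurdle'' (that the exceptional set $\Lambda-\partial\partial_{\Gn}P$ has codimension one inside the $(d-m)$-dimensional support of $\nu_\Lambda$, hence carries no $V_{\Gn}$-mass) is exactly the point that makes the passage from Lemma~\ref{lm_discrete} to a measure identity legitimate.
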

\begin{proof}

Let us recall what we have so far. Lemma~\ref{lm_discrete} tells us
$\sum\limits_{\lambda\in\Lambda}\partial_{\Gn}1_{P+v}(\lambda)=0$
for any $v$ with $v+\partial\partial_{\Gn}P$ containing no $\Lambda$
points.

For each $\lambda\in\Lambda$ let us consider a set $S$ of vectors $v$ enjoying the property that
 $\lambda\in v+\partial_{\Gn}P$ and $\Lambda\cap
\{v+\partial\partial_{\Gn}P\}=\emptyset$.   We call the set $S$
$\Gn$-interior w.r.t. $\lambda$.  We can also realize the set $S$ by excluding a finite number of  lower dimensional polytopes (polytopes $F$ with
$V_{\Gn}(F)=0$) from $\lambda-\partial_{\Gn}P$.  We call a vector $\Gn$-internal if it belongs to
$\Gn$-interior for some $\lambda\in\Lambda$.

Assume now that $V_{\Gn}(\partial_{\Gn}1_P)=A_1\neq 0$. Let us
also write $V_{\Gn}(|\partial_{\Gn}1_P|)=A_2\ge|A_1|>0,$ where by
$|\partial_{\Gn}1_P|$ we imply the sum of indicators of
$\partial_{\Gn}1_P$ with all negative coefficients of indicators
switched to their absolute value.

For any $R>0$ we may consider a ball $B_R$ in $\R^d$ with the center
at origin and given radius $R$. Clearly, there is a constant
$C=C(P)$, such that $B_R + (-1)\partial_{\Gn}P\subset B_{R+C}$ (see fig.\,\ref{fig:balls}). For
any positive real $R$ we define $N(R):=\#\{B_R\cap\Lambda\}$. For
each $\Gn$-internal $v\in B_R$ we may rewrite the formula from lemma
\ref{lm_discrete} and get
$$\sum_{\lambda\in
B_{R+C}\cap\Lambda}\partial_{\Gn}1_{\lambda-P}(v)=0.$$ This implies
$$V_{\Gn}\left(1_{B_R}\cdot\sum_{\lambda\in
B_{R+C}\cap\Lambda}\partial_{\Gn}1_{\lambda-P}\right)=0.$$

Also we know that $$\left|V_{\Gn}\sum_{\lambda\in
B_{R+C}\cap\Lambda}\partial_{\Gn}1_{\lambda-P}\right|=N(R+C)|A_1|.$$

\begin{eqnarray*}
\left|V_{\Gn}\left(\sum_{\lambda\in
B_{R+C}\cap\Lambda}\partial_{\Gn}1_{\lambda-P}\right)\right| &\leq&
\left|V_{\Gn}\left(1_{B_R}\cdot\sum_{\lambda\in
B_{R+C}\cap\Lambda}\partial_{\Gn}1_{\lambda-P}\right)\right|+
\\
&~&
\left|V_{\Gn}\left(\left(1_{B_{R+2C}}-1_{B_{R}}\right)\cdot\sum_{\lambda\in
B_{R+C}\cap\Lambda}
\partial_{\Gn}1_{\lambda-P}\right)\right|=
\\
&~&
\left|V_{\Gn}\left(\left(1_{B_{R+2C}}-1_{B_{R}}\right)\cdot\sum_{\lambda\in\left(B_{R+C}\setminus
B_{R-C}\right)\cap\Lambda}\partial_{\Gn}1_{\lambda-P}\right)\right|\leq
\\
&\leq& V_{\Gn}\sum_{\lambda\in\left(B_{R+C}\setminus
B_{R-C}\right)\cap\Lambda}|\partial_{\Gn}1_{\lambda-P}|=A_2\cdot\left(N(R+C)-N(R-C)\right).
\end{eqnarray*}

\begin{figure}
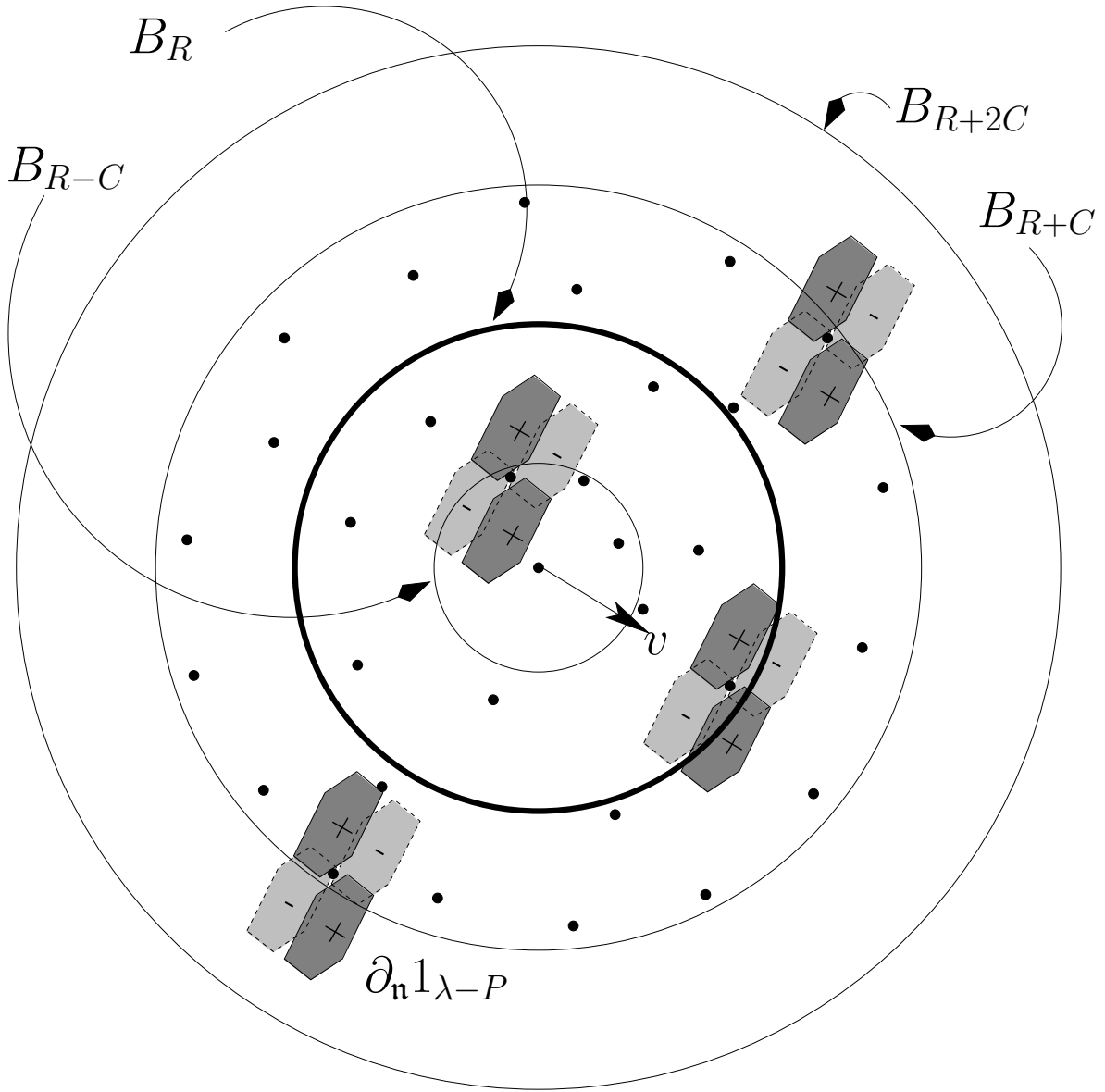

\begin{center}
\include{Figures/fig_balls_new}
\end{center}
\caption{None of the $\Lambda$-translates of $\partial_{\Gn}1_{\lambda-P}$ can overlap more than two adjacent shells between the concentric balls. }
\label{fig:balls}
\end{figure}

Thus we get $(1-\frac{|A_1|}{A_2})N(R+C)\geq N(R-C)$, which
establishes an exponential grows of $N(R)$ in $R$. We can cover
$B_R$ by a disjoint union of $O(R^{2d})$ cubes whose side-length is $\frac{1}{R}$. Thus taking sufficiently large $R$ we can find a
cube $K$ with side-length $\frac{1}{R}$, which contains more than $k$ $\Lambda$-points. We can now translate $P$ so that the cube $K$ is contained in $P$, and therefore this translate of $P$ now contains more than $k$ $\Lambda$-points, a contradiction. 

\end{proof}

In order to finish the proof of main theorem, we need the following
theorem by Minkowski~\cite{Min}.
\begin{theorem}[Minkowski]\label{thm_mink}
Convex polytope in $\R^d$ with given facet normals and facet
$(d-1)$-volumes is unique up to translation.
\end{theorem}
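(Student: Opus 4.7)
The plan is to prove this by the classical method of mixed volumes together with the Brunn--Minkowski inequality (following Minkowski's own argument). Let $P$ and $Q$ be two convex polytopes in $\R^d$ sharing the same set of outward facet unit normals $u_1,\dots,u_m$ and satisfying $V^{d-1}(F_i(P))=V^{d-1}(F_i(Q))=a_i$ for each $i$. The goal is to produce a translation $t\in\R^d$ with $P=Q+t$.

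First I would write the pyramidal volume formula. For any convex polytope $K$ with support function $h_K(u)=\sup_{x\in K}\langle u,x\rangle$ and facets $F_i(K)$ with outward normals $u_i$, one has
$$d\cdot V(K)=\sum_{i=1}^m h_K(u_i)\,V^{d-1}(F_i(K)).$$
This suggests defining a bilinear mixed volume
$$d\cdot V_1(K,L):=\sum_{i=1}^m h_L(u_i)\,V^{d-1}(F_i(K)).$$
Substituting the hypothesis $V^{d-1}(F_i(P))=V^{d-1}(F_i(Q))=a_i$ gives the two key identities
$$d\cdot V_1(P,Q)=\sum_i h_Q(u_i)\,a_i=d\cdot V(Q), \qquad d\cdot V_1(Q,P)=\sum_i h_P(u_i)\,a_i=d\cdot V(P).$$

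Next I would invoke \textbf{Minkowski's first inequality}: for convex bodies $K,L\subset\R^d$,
$$V_1(K,L)^d\;\geq\;V(K)^{d-1}\cdot V(L),$$
with equality if and only if $K$ and $L$ are homothetic (i.e.\ $K=\alpha L+t$ for some $\alpha>0$, $t\in\R^d$). Applying it to the pairs $(P,Q)$ and $(Q,P)$ and multiplying,
$$V(Q)^d\cdot V(P)^d=V_1(P,Q)^d\cdot V_1(Q,P)^d\;\geq\;V(P)^{d-1}V(Q)\cdot V(Q)^{d-1}V(P)=V(P)^d V(Q)^d,$$
so equality must hold in both instances of the inequality. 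Hence $P$ and $Q$ are homothetic, say $P=\alpha Q+t$ with $\alpha>0$. But then $V^{d-1}(F_i(P))=\alpha^{d-1}V^{d-1}(F_i(Q))$, and comparing with $a_i=a_i$ forces $\alpha=1$, yielding $P=Q+t$.

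The main obstacle is the equality case of Minkowski's first inequality, which is not elementary; however, it is a classical theorem in convex geometry (for a modern exposition see R.~Schneider's monograph on convex bodies), and in this paper we invoke it as a black box. All other steps are essentially algebraic manipulations of the support function and the pyramidal volume formula.
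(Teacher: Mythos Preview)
Your argument is the standard Minkowski proof via mixed volumes and is correct. From the equal facet data you obtain $V_1(P,Q)=V(Q)$ and $V_1(Q,P)=V(P)$; feeding these into Minkowski's first inequality in both directions forces $V(P)=V(Q)$ and hence equality in the inequality, whence $P$ and $Q$ are homothetic and then translates. One small remark: you should make explicit that the hypothesis guarantees $P$ and $Q$ have \emph{exactly} the same set of facet normals (no extra facets on either side), so that the pyramidal formula for $V(Q)$ really does reduce to $\sum_i h_Q(u_i)a_i$ with the same index set.

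As for comparison with the paper: the paper does not prove Theorem~\ref{thm_mink} at all. It is quoted as a classical result of Minkowski~\cite{Min} and used as a black box in the proof of Theorem~\ref{thm_main}. So there is nothing to compare against; you have supplied the classical argument that the authors chose to cite rather than reproduce.
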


\bigskip
\begin{proof}[Proof of Theorem \ref{thm_main}]
We will first prove that $P$ is centrally symmetric. Take any pair of
facets of $P$, $F^{+}$ and $F^{-}$, with outward normals $n$ and
$-n$ respectively. Applying lemma~\ref{lm_integral} to $\Gn=(n)$ we
get $V_{\Gn}( \partial_{\Gn}1_P) =0$, which means that
$V(F^{+})=V(F^{-})$. Since $n$ can be chosen arbitrarily, polytopes
$P$ and $(-1)\cdot P$ have equal codimension $1$ volumes of facets
in every direction. By theorem~\ref{thm_mink} we get that
$P=(-1)\cdot P+v$ for some translation vector $v$, so $P$ is
centrally symmetric.

Similarly we prove that everly facet of $P$ is centrally symmetric.
Given a pair of opposite facets $F_1$ and $F_2$ of $P$ with outward
normals $n_1$ and $-n_1$ respectively, consider any direction
$n_2\in (n_1)^{\bot}$ and two pairs of corresponding faces of
codimension $2$: $F_1^{+}$ and $F_1^{-}$ are facets of $F_1$ with
outward normals $n_2$ and $-n_2$ respectively, $F_2^{+}$ and
$F_2^{-}$ are facets of $F_2$ with outward normals $n_2$ and $-n_2$
respectively. Applying lemma~\ref{lm_integral} to $\Gn=(n_1, n_2)$
we get $V_{\Gn}( \partial_{\Gn}1_P) =0$, which means that
$(V(F_1^{+})-V(F_1^{-}))-(V(F_2^{+})-V(F_2^{-}))=0$. But since $P$
is centrally symmetric, $F_1^{+}$ and $F_2^{-}$ are symmetric to
each other as well as $F_1^{-}$ and $F_2^{+}$, so
$V(F_1^{+})=V(F_2^{-})$ and $V(F_1^{-})=V(F_2^{+})$. 

Combining the last
three equations we get an equality for codimension $2$ faces of $P$:  $V(F_1^{+})=V(F_1^{-})$.  It follows that as $(d-1)$-dimensional objects, $F_1$ and $(-1)\cdot F_1$, themselves have equal facets in every direction (in their affine span), and again by
theorem~\ref{thm_mink} we get that $F_1$ is centrally symmetric. But
since $F_1$ could be chosen arbitrarily among the facets of $P$, every facet of $P$ is
centrally symmetric, which concludes the proof of
theorem~\ref{thm_main}.
\end{proof}

\begin{remark}
We note that  Lemma \ref{lm_discrete} gives us interesting information about the relationship between the $\Lambda$ points that lie in various faces, for any frame that has more than $2$ vectors.   In contrast, Lemma \ref{lm_integral} does not give us any additional information about the codimension $3$ volumes (or higher codimension volumes).   It is for this reason that we cannot conclude that codimension $3$ faces of a $k$-tiling polytope are centrally symmetric, and in fact they are not in general centrally symmetric, as the example of the $24$-cell shows.  
\end{remark}


\section{Proof of Theorem \ref{thm_main2} }\label{second main theorem}

\begin{proof}
We may assume, without loss of generality,  that our rational
polytope $P$ is an integer polytope, by dilating it by the lcm of
the denominators of all of the rational coordinates of its vertices. 
Now, given that $P$ has integer vertices, we will show that the polytope $P$ $k$-tiles $\R^d$ with $\Lambda=\Z^d$.

We claim  that in every general position $P$ has an equal number of
integer points on every pair of opposite facets. Indeed, since it is
centrally symmetric and has centrally symmetric facets (and integer
vertices), any two opposite facets are translations of one another
by some integer vector. It follows that for every integer point on a
facet there is a corresponding integer point on an
opposite facet, so their numbers are equal.

Now, consider any two general positions of $P$, say $P+u$ and $P+v$. 
There exists some path from $u$ to $v$ such that when we translate $P$ along this
path, no integer point of $\Z^d$ collides with any co-dimension $2$ face of the translates of $P$ along this path (see fig.\,\ref{moving.polygon}).
But
since in any general position the number of integer points on two
opposite facets of $P$ are equal, it follows that the number of points
inside $P$ along this path is constant. We conclude that any two
general positions of $P$ have the same number of interior integer points,
say $k$. Thus, $-P$ $k$-tiles $\R^d$ with the lattice $\Z^d$.
\end{proof}

\begin{figure}
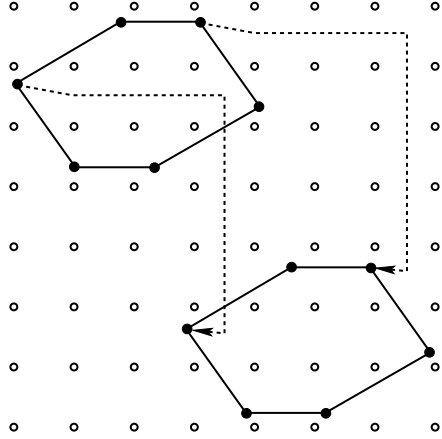

\begin{center}
\include{Figures/fig_path}
\end{center}
\caption{ This polygon illustrates that fact that there is always a
continuous path that a polygon $P$ may take so that the vertices of
$P$ (and in general the codimension $2$ faces of $P$) never pass
through the discrete set of translations vectors $\Lambda$, shown
here as a lattice.} \label{moving.polygon}
\end{figure}

\newpage
\section{An analytic approach, using Fourier techniques}\label{FourierSection}

In this section we give another proof of the main result, Theorem
\ref{thm_main}, but this time from the Fourier perspective, so that
we may employ the language of generalized functions. The reader may
consult  the classic reference \cite{Stein} for more information about Fourier analysis on Euclidean spaces.   We begin once again with
the definition of a $k$-tiling.  Thus, we suppose that a polytope
$P$
$k$-tiles $\R^d$ with some discrete multiset $\Lambda$.  In other
words, we assume that
$$
\sum\limits_{\lambda\in\Lambda}1_{P+\lambda}(v)=k,
$$
for all ${v}\notin \partial P+\Lambda$.   We can rewrite this
condition as a convolution of generalized functions, as follows:
\begin{equation}\label{convolution}
1_P * \delta_\Lambda = k,
\end{equation}
where $1_P$ is the indicator function of $P$, and where
$\delta_\Lambda :=  \sum_{\lambda \in \Lambda} \delta_\lambda$,
where $\delta_\lambda$ is the unit point mass for the point $\lambda
\in \Lambda$.   That is, $\delta_\lambda$ equals $1$ at the point
$\lambda$ and zero elsewhere.    We first differentiate both sides
of (\ref{convolution}), with respect to any $\xi \in \R^d$,
obtaining
\begin{equation}\label{Step1}
\frac{d}{d \xi} \left( 1_P * \delta_\Lambda \right)= (\frac{d}{d
\xi} 1_P) * \delta_\Lambda = 0.
\end{equation}

\noindent Next, we take the Fourier transform of both sides of
(\ref{Step1}), obtaining
\begin{equation}\label{Step2}
\left( \xi \hat 1_P \right) \, \hat \delta_\Lambda = 0,
\end{equation}
where the last step uses the standard Fourier identities $\widehat{
\left( \frac{d}{dx} F \right)}(\xi) = \xi \hat F(\xi)$, and
$\widehat{F*G} = \hat F \hat G$. If we now have some more detailed
knowledge about $\hat 1_P$, then we can use (\ref{Step2}) to proceed
further.

The next result is a useful combinatorial version of Stokes'
formula, which holds for the Fourier transform of the indicator
function of any polytope.  This is a result about $\hat 1_P$ that
appears to be not as well-known, so we prove it in complete detail.
For the transform of a function on  $\R^d$, we use the standard
definition:
\[
\hat 1_P(\xi) := \int_P \expo(2\pi i \langle \xi, x \rangle ) dx,
\]
valid for any $\xi \in \R^d$, because $P$ is compact.

\begin{theorem}\label{Stokes}
Let $F$ be a $k$-dimensional polytope in $\mathbb R^d$, for any $k
\leq d$. Let $Proj_F(\xi)$ denote the orthogonal projection of $\xi$
onto the $k$-dimensional subspace of $\mathbb R^d$ that is parallel
to $F$. Moreover, for each $(k-1)$-dimensional face $G \in \partial
F$, let $n_G$ be its outward pointing normal vector. Then the
Fourier transform of the indicator function of $F$ can be written as
follows:
\medskip

Case I.  \   \ If $Proj_F(\xi) = 0$, then
\[
\hat 1_F(\xi) = V^k(F) {\expo}(2 \pi i \Phi ),
\]
where $\Phi$ is the constant value of the function $\phi(x) :=
\langle \xi, x \rangle$ on $F$.

\medskip
Case II.  \  \ If $Proj_F(\xi) \not= 0$, then
\[
\hat 1_F(\xi) = -\frac{1}{2\pi i} \sum_{G \in \partial F}
\frac{\langle Proj_F(\xi), n_F \rangle}{|| Proj_F(\xi)||^2 } \hat
1_G(\xi).
\]
\end{theorem}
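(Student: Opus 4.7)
The plan is to derive both cases by applying the classical divergence theorem inside the $k$-dimensional affine hull of $F$. The key observation for Case~II is that when $Proj_F(\xi)\neq 0$, the exponential $e^{2\pi i\langle \xi,x\rangle}$ on $F$ can be written as the intrinsic divergence of a simple constant-direction vector field parallel to $Proj_F(\xi)$, so that the $k$-dimensional Fourier integral on $F$ collapses to a weighted sum of $(k-1)$-dimensional Fourier integrals on its facets.

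Case~I is immediate: if $Proj_F(\xi)=0$ then $\xi$ is orthogonal to the affine hull of $F$, so $\langle \xi,x\rangle$ takes a single constant value $\Phi$ on all of $F$, and pulling the constant phase out of the integral gives $\hat 1_F(\xi)=V^k(F)\,e^{2\pi i\Phi}$. For Case~II, I would introduce the vector field
\[
\vec W(x)=\frac{Proj_F(\xi)}{2\pi i\,\|Proj_F(\xi)\|^2}\,e^{2\pi i\langle \xi,x\rangle},
\]
which is tangent to the affine hull of $F$. Since the gradient of $e^{2\pi i\langle \xi,x\rangle}$ \emph{within} that affine hull is $2\pi i\,Proj_F(\xi)\,e^{2\pi i\langle \xi,x\rangle}$, a direct computation shows that the intrinsic divergence $\mathrm{div}_F\vec W$ equals $e^{2\pi i\langle \xi,x\rangle}$. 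Applying the divergence theorem to the compact polytope $F$, regarded inside its own affine span, then yields
\[
\hat 1_F(\xi)=\sum_{G\in\partial F}\int_G\langle \vec W,n_G\rangle\,d\sigma=\frac{1}{2\pi i}\sum_{G\in\partial F}\frac{\langle Proj_F(\xi),n_G\rangle}{\|Proj_F(\xi)\|^2}\,\hat 1_G(\xi),
\]
which matches the stated formula up to the orientation convention chosen for the outward normals.

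There is no real analytic obstacle in the argument: the divergence theorem applies straightforwardly to a compact polytope with piecewise-linear Lipschitz boundary, and the factorization of the integrand as a divergence is dictated by inverting the intrinsic directional derivative along $Proj_F(\xi)$. The only care required is bookkeeping — distinguishing the intrinsic gradient and divergence on the affine hull of $F$ from their ambient $\R^d$ counterparts, and keeping the outward-normal orientations consistent when summing over facets of $F$. The real value of the theorem is the recursive structure it exposes: each application reduces the face dimension by one, so iterating it expands $\hat 1_P(\xi)$ as an explicit sum indexed by the lower-dimensional faces of $P$, which is exactly the structural information needed to exploit equation~(\ref{Step2}) in the subsequent analysis.
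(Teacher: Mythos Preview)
Your argument is correct and is essentially the same as the paper's: both write $e^{2\pi i\langle \xi,x\rangle}$ as the intrinsic divergence on $F$ of a vector field proportional to $Proj_F(\xi)\,e^{2\pi i\langle \xi,x\rangle}$ and then apply the divergence theorem over the facets of $F$. The only cosmetic difference is that the paper phrases this as ``$e^{2\pi i\phi}$ is an eigenfunction of the Laplacian on $F$'' before invoking Stokes, whereas you construct the antiderivative vector field $\vec W$ directly; your sign $+\tfrac{1}{2\pi i}$ is in fact the one that drops out of either computation with outward normals.
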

\begin{proof}
We note that the gradient of $\phi(x):= \langle \xi, x \rangle$,
with respect to the Riemannian structure of the submanifold $F
\subset \R^d$, is simply the projection of the $d$-dimensional
Euclidean gradient of $\phi$ onto $F$.  We denote this projection by
$\textup{grad}_F\phi$ in the argument that follows.  Fix any $\xi
\in \R^d$.

Case I.  \  If $Proj_F(\xi) = 0$, then $\textup{grad}_F\phi = 0$, so
that $\phi$ is constant on $F$. The Fourier integral defining $\hat
1_F(\xi)$ in this case degenerates into an integral of a constant
function on $F$, hence the conclusion of the theorem for this case.

Case II.  \ If $Proj_F(\xi) \not= 0$, then from the linearity of
$\phi$ it follows that $\textup{grad}_F\phi(x) = 2\pi Proj_F(\xi)$
is a constant vector field on $F$. The identity
\[
\textup{div}_F \textup{grad}_F exp(2\pi i \phi(x)) = (2\pi i)^2
||\textup{grad}_F \phi(x)||^2 exp(2\pi i \phi(x))
\]
shows us that $\textup{exp}(2\pi i \phi(x))$ is an eigenfunction of
the Laplacian, with eigenvalue
\[
\lambda := (2\pi i)^2 ||\textup{grad}_F\phi||^2 \not=0.
\]
Hence
\begin{align*}
        \hat 1_F(\xi) &=  \int_F e^{2\pi i \phi(x)} \\
         &=  \frac{1}{\lambda}
             \int_F \textup{div} \left(\textup{grad}_F e^{2\pi i \phi(x)} \right) dF \\
         &=  \frac{1}{\lambda} \sum_{G \in \partial F} \int_G
              \langle \ \textup{grad}_F e^{2\pi i \phi(x)}, n_G \ \rangle  dG,
    \end{align*}
where we've used the identity for the Laplacian above in the second
equality, and Stokes' theorem for the polytope $F$ and its finite
collection of boundary polytope components $G \in \partial F$ in the
third equality. Unravelling the remaining definitions, we get:
\begin{align*}
        \hat 1_F(\xi) &=  \frac{2\pi i}{\lambda} \sum_{G \in \partial F}
       \langle \textup{grad} \phi(x), n_G \ \rangle  \int_G  e^{2\pi i \phi(x)} dG \\
          &=  -\frac{1}{2\pi i} \sum_{G \in \partial F}
           \frac{ \langle Proj_F (\xi), n_G \rangle}{ ||Proj_F(\xi)||^2 }  \hat 1_G(\xi).
\end{align*}
\end{proof}

The result above uses functions, as opposed to generalized
functions, but we may indeed pass to generalized functions, abusing
the notation $\hat 1_P$ only slightly.

Applying Theorem (\ref{Stokes}) above to the generalized function
$1_P$, we may continue from (\ref{Step2}) to get the identity
\begin{equation}\label{Step3}
\left( \sum_{F \in \partial P}  \xi \frac{ \langle \xi, n_F  \rangle
}{\langle \xi, \xi \rangle} \hat 1_F \right) \, \hat \delta_\Lambda
= 0,
\end{equation}
valid for any nonzero $\xi \in \R^d$.   We also note that the sum
runs over all the (codimension $1$) facets $F$ of the boundary
$\partial P$.  It now follows, upon taking the inner product with
$\xi$, that
\begin{equation}\label{Step4}
\left( \sum_{F \in \partial P}   \langle \xi, n_F \rangle \, \hat
1_F  \right) \, \hat \delta_\Lambda = 0.
\end{equation}
Taking Fourier transforms again, we may rewrite the last equation as
\begin{equation}\label{Step5}
\left( \sum_{F \in \partial P}  \left(   \frac{d}{d \,n_F}   \right)
( 1_F ) \right) * \delta_\Lambda = 0.
\end{equation}

We now focus our attention on each {\it pair} of facets of $P$, as
in the first section.  Thus, we consider a facet $F^+$ with its
outward pointing normal $n(F)$, and a parallel facet $F^-$, with its
outward pointing normal $-n(F)$.

\begin{lemma}\label{FacetPairs}
For each facet $F$ of $P$, we have the identity
\[
  \left(  1_{F^+} - 1_{F^-}  \right) * \delta_\Lambda = 0.
 \]
\end{lemma}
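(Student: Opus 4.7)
The plan is to take equation~(\ref{Step5}) and regroup its sum by pairs of opposite parallel facets, then isolate each pair's contribution by a singular-support argument. For each outward-normal direction $n$ present among the facets of $P$, let $F^+$ be the facet with outward normal $n$ and $F^-$ the facet with outward normal $-n$ (taking $F^-$ empty if $P$ has no facet with outward normal $-n$). The pair's contribution to~(\ref{Step5}) is
\[
\frac{d}{d\,n_{F^+}}(1_{F^+}) + \frac{d}{d\,n_{F^-}}(1_{F^-}) \;=\; \frac{d}{d\,n}\bigl(1_{F^+} - 1_{F^-}\bigr),
\]
so (\ref{Step5}) becomes $\sum_n \frac{d}{d\,n} D_n = 0$, where $D_n := (1_{F^+} - 1_{F^-}) * \delta_\Lambda$. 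The lemma is then equivalent to $D_n = 0$ for every pair.

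The main step is a separation-by-supports argument. Each $D_n$ is a signed sum of $(d-1)$-dimensional surface measures on $\Lambda$-translates of the two hyperplanes containing $F^+$ and $F^-$, so the support of $D_n$ lies in a union of hyperplanes all perpendicular to $n$. For two pairs whose normals $n \neq \pm n'$ are non-parallel, the corresponding hyperplane families are non-parallel and meet only in a countable union of codimension-two affine subspaces. Choose $x_0$ on a support-hyperplane of $D_{n_0}$ that lies off every other $D_n$'s support; such $x_0$ are dense in the support of $D_{n_0}$. Localizing $\sum_n \frac{d}{d\,n} D_n = 0$ in a neighborhood of $x_0$, only the term with $n = n_0$ survives, so $\frac{d}{d\,n_0} D_{n_0} = 0$ in this neighborhood. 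But near $x_0$ the distribution $D_{n_0}$ is (up to sign) the surface measure on a single facet-translate perpendicular to $n_0$; its normal derivative is therefore a dipole distribution whose coefficient is exactly that surface measure, and the dipole vanishes only when the coefficient does. Letting $x_0$ range over a dense subset of the support of $D_{n_0}$ forces $D_{n_0} \equiv 0$.

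The hard part will be making this localization precise at the codimension-two intersections where several $D_n$'s may overlap; this is most cleanly handled in the language of tempered distributions and singular supports, using that different pairs produce singular supports in linearly independent conormal directions, so that they cannot cancel one another. An equivalent set-up in Fourier space uses the factorization $\hat 1_{F^\pm}(\xi) = e^{2\pi i c^\pm \langle \xi,n\rangle}\,\hat 1_{F_0^\pm}(\xi_\perp)$, where $\xi_\perp$ is the projection of $\xi$ onto $n^\perp$; this exhibits each pair's contribution to~(\ref{Step4}) as an oscillation in its own normal direction, and the pairs can then be decoupled by testing against smooth Fourier cutoffs supported in narrow cones around each $n$.
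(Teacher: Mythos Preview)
Your approach is essentially the paper's: both isolate one facet-pair's contribution from~(\ref{Step5}) by localizing away from the supports of the other pairs (which lie in hyperplanes with different normals), and both then use that the normal derivative of a nonzero hyperplane-supported measure cannot vanish. The paper makes this concrete by constructing an explicit test function $g$ supported in a small ball off the other pairs' supports and with $\partial g/\partial n_F>0$ there; your final paragraph's concern about codimension-two overlaps is unnecessary, since the density argument you already gave---combined with the discreteness of $\Lambda$---suffices without any wavefront or Fourier-cone machinery.
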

\begin{proof}
We assume that  $ \left(  1_{F^+} - 1_{F^-}  \right) *
\delta_\Lambda \not= 0$.  Therefore there exists a small ball $B_r$,
of radius $r$,  such that for any nonnegative, nonzero test
function $f$ whose support is contained in $B_r$, we have $  \langle
\left(  1_{F^+} - 1_{F^-}  \right) * \delta_\Lambda, f \rangle \not=
0$. We may further assume that the support of $f$ is disjoint from
the support of $ \left(  1_{G^+} - 1_{G^-}  \right) *
\delta_\Lambda$, for any facet $G$ of $P$ where $G \not= F$. Indeed,
the discreteness of $\Lambda$ guarantees that we can find such a
ball $B_r$ on which $f$ satisfies the above conditions.

Now we construct a test function $g$ whose support is contained in
$B_r$, with positive derivative $\left( \frac{d}{d n_F} \right)$
along the direction $n_F$, in a small $\epsilon$ vicinity of $B_r
\cap Supp( \left(  1_{F^+} - 1_{F^-}  \right) * \delta_\Lambda ) :=
D_\epsilon$.    To construct such a $g$, we first restrict $f$ to
 $D_\epsilon$, call it $f_0$.   We now multiply $f_0$ by a one dimensional smooth bump function $b$ whose derivative on $[-\epsilon, \epsilon]$ is
 positive, and whose support lives in $[-2\epsilon, 2\epsilon]$.  Thus $g := f_0 \cdot b$ has positive derivative on $D_\epsilon$.   When we
 insert this $g$ into (\ref{Step5}), we arrive at a contradiction. Indeed $<\left(  1_{G^+} - 1_{G^-}  \right) * \delta_\Lambda,\frac{d}{d n_G} g>=0$ for $G \not= F$ because the choice of the support of $g$. On the other hand  $<\left(  1_{F^+} - 1_{F^-}  \right) * \delta_\Lambda, \frac{d}{d n_F} g>\not=0$ by the construction, since $\frac{d}{d n_F}g$ is positive in the vicinity of the support of $\left(  1_{F^+} - 1_{F^-}  \right) * \delta_\Lambda$.


\end{proof}

We finish this section by remarking that iteration of Lemma \ref{FacetPairs} allows us to establish the same conclusion as Lemma \ref{lm_discrete}.   The next iteration would be applied to a normal vector to a facet of $F^+$ within the affine span of the facet $F^+$.    The Fourier analogue of Lemma \ref{lm_integral} involves the scalar product of $ \left(  1_{F^+} - 1_{F^-}  \right) * \delta_\Lambda$ against an ``approximate identity'' function, compactly supported on a large ball.   The main Theorem \ref{thm_main} now follows in a similar manner as in the previous section.   


\section{Another equivalent condition for $k$-tiling, using solid angles}\label{solid-angles}
Here we show that it is possible to reinterpret the condition that a
polytope $k$-tiles $\R^d$ by considering all of the solid angles
$\omega_{P}(\lambda)$ of the $d$-dimensional convex polytope $P$, at
each point $\lambda \in \Lambda$.  For any point $\lambda \in \R^d$, we define
the solid angle at $\lambda$ to be the proportion of a small sphere of radius $R$, centered at $\lambda$, which intersects $P$.
More precisely, the solid angle is defined by 
\[
\omega_{P}(\lambda)=\lim\limits_{R\rightarrow 0}
\frac{V\left(\{\lambda+B_R)\}\cap P\right)}{V(B_R)},
\]
where $V(S)$ is the $d$-dimensional volume of $S$. The following
Theorem is of independent interest, showing another interesting
equivalent condition for $k$-tiling Euclidean space.

\begin{theorem}
A polytope $P$ $k$-tiles $\R^d$ with the multiset $\Lambda$  if and only if
\[
\sum\limits_{\lambda\in{\Lambda}}\omega_{P+v}(\lambda)=k,
\]
for every $v \in \R^d$.
\end{theorem}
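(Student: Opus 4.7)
The plan is to reduce the solid angle identity to the counting condition of Lemma~\ref{lm_restate}, and then promote the equality from generic $v$ to arbitrary $v$ by a local-averaging argument.

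Call $v \in \R^d$ \emph{generic} if $\Lambda \cap \partial(P+v) = \emptyset$; the non-generic $v$ form a locally finite union of translates of $\partial P$, hence a null set. For such generic $v$, each solid angle $\omega_{P+v}(\lambda)$ equals the indicator $1_{\text{Int}(P+v)}(\lambda)$, since no $\lambda$ lies on a boundary facet, so
\[
\sum_{\lambda \in \Lambda} \omega_{P+v}(\lambda) \;=\; \#\bigl(\Lambda \cap (P+v)\bigr).
\]
Meanwhile, expanding the $k$-tiling condition $\sum_\lambda 1_{P+\lambda}(v) = k$ produces $\#(\Lambda \cap (v-P)) = k$ for generic $v$. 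Any $k$-tiler is centrally symmetric by Theorem~\ref{thm_main}, so $-P = P+c$ for some $c \in \R^d$; hence the sets $v - P$ and $P + v$ differ only by the fixed translation $c$, and the two conditions agree for all generic $v$ up to a shift that is absorbed by the ``for every $v$'' quantifier.

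To upgrade the forward direction from generic $v$ to every $v$, I would average the a.e.\ identity $\sum_\lambda 1_{P+\lambda}(w) = k$ over a small ball $B_r(v)$:
\[
k \;=\; \frac{1}{V(B_r)} \int_{B_r(v)} \sum_\lambda 1_{P+\lambda}(w)\,dw \;=\; \sum_\lambda \frac{V\bigl(B_r(v - \lambda) \cap P\bigr)}{V(B_r)}.
\]
For fixed $v$ and bounded $r$, only finitely many $\lambda$ satisfy $B_r(v) \cap (P+\lambda) \neq \emptyset$, so one may interchange the sum with the limit $r \to 0^+$. Each summand then tends to $\omega_P(v - \lambda)$, and central symmetry converts this into $\omega_{P+v}(\lambda)$ after absorbing the shift $c$ into $v$. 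The reverse direction is immediate from the generic-$v$ analysis above combined with Lemma~\ref{lm_restate} and Theorem~\ref{thm_main}: the hypothesis at generic $v$ supplies $\#(\Lambda \cap (P+v)) = k$, which forces $-P$ to $k$-tile with $\Lambda$, and hence $P$ to $k$-tile with $\Lambda$ as well by central symmetry.

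The main obstacle is justifying the interchange of $\lim_{r \to 0^+}$ with the infinite sum over $\Lambda$ in the averaging step. This is handled cleanly by the local finiteness of $\Lambda$ combined with the boundedness of $P$: only $\lambda$ within distance $r + \mathrm{diam}(P)$ of $v$ can contribute, so for $r$ uniformly bounded the sum is effectively finite on a neighborhood of $v$, and the interchange is routine. The only other bookkeeping concerns the $P \leftrightarrow -P$ sign convention, which is resolved entirely once Theorem~\ref{thm_main} supplies central symmetry.
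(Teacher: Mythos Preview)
Your proposal is correct and follows essentially the same route as the paper: average the almost-everywhere tiling identity over a small ball centred at $v$, pass to the limit to obtain the solid-angle sum, and invoke Theorem~\ref{thm_main} (central symmetry) together with Lemma~\ref{lm_restate} for the converse. The only cosmetic difference is where the central symmetry is inserted: the paper applies it at the outset (replacing $P$ by $-P$ so that Lemma~\ref{lm_restate} gives $\#(\Lambda\cap(P+x))=k$ directly, and the averaging yields $\omega_{P+v}(\lambda)$ with no residual shift), whereas you apply it at the end and absorb the shift $c$ into the ``for every $v$'' quantifier. You are also slightly more explicit than the paper about why the interchange of $\lim_{r\to 0}$ with the sum over $\Lambda$ is legitimate.
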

\begin{proof}
Suppose that $P$ $k$-tiles $\R^d$ with the multiset $\Lambda$.   We know from Theorem \ref{thm_main} that $P$ must be centrally symmetric, and therefore 
 $-P$ $k$-tiles as well, with the multiset $\Lambda$.  By Lemma \ref {lm_restate}   $\#(\Lambda\cap\{P+x\})=k$ for
almost every $x\in\R^d$.   We can therefore integrate this equality in the variable $x$,
over a $d$-dimensional ball $ B_R(v)$ with center in $v$ and
radius $R$, as follows:

\begin{align*}
k \cdot V(B_R(v))=\int\limits_{B_R(v)}k \,dx  &=
\int\limits_{B_R(v)}\#(\Lambda\cap\{P+x\})dx \\
&= \int\limits_{B_R(v)}\sum\limits_{\lambda\in\Lambda}1_{\lambda-P}(x)dx \\
&= \sum\limits_{\lambda\in\Lambda}\int\limits_{B_R(v)}1_{\lambda-P}(x)dx \\
&=\sum\limits_{\lambda\in\Lambda}V(B_R(v)\cap\{\lambda-P\}) \\
&=\sum\limits_{\lambda\in\Lambda}V(\{\lambda-B_R\}\cap\{P+v\})
\end{align*}

\noindent
It follows that
$k=\sum\limits_{\lambda\in\Lambda}\frac{V(\{\lambda-B_R\}\cap\{P+v\})}{V(B_R(v))}$,
which approaches
$\sum\limits_{\lambda\in\Lambda}\omega_{P+v}(\lambda)$ as $R$ goes
to $0$.  

\medskip
\noindent 
In the other direction, the assumption that $\sum\limits_{\lambda\in{\Lambda}}\omega_{P+v}(\lambda)=k$ is, in general position, equivalent to
the statement that  $\#(\Lambda\cap\{P+x\})=k$.   By Lemma \ref {lm_restate} we conclude that $-P$ $k$-tiles with the multiset $\Lambda$.
Finally, by Theorem \ref{thm_main} we know that $P$ is centrally symmetric, so that $P$ $k$-tiles with the same multiset $\Lambda$. 
\end{proof}

\medskip
We note that a particularly interesting choice of $v$ in this
Theorem is the value $v=0$, so that we can in fact have points in
$\Lambda$ coincide with vertices of $P$.   This equivalent condition
allows us to consider such coincidences without having to translate
$P$ into general position.


\section{Some open questions}\label{open}

\bigskip
We conclude our paper with some fascinating open questions which the main results of the present paper suggest as a natural 
 research direction for $k$-tilings, a relatively new area.  
\bigskip

 \bigskip
 \noindent
1.   \ Recall that the  Venkov-McMullen condition for the existence of belts consisting of $4$ or $6$ parallel codimension $2$ faces allowed an
  ``if and only if" characterization for $1$-tiling polytopes.   Find the analogous additional condition that would give a complete characterization for 
  $k$-tiling polytopes.

 \medskip
 \noindent
2.  \ Classify the combinatorial types of all polytopes which $k$-tile $\R^d$ by translations.

\medskip
We note that for the classical question of $1$-tiling $\R^d$ by
parallelotopes (and parallelotopes are the only objects that can
tile $\R^d$, by McMullen's theorem), there are exactly $5$
combinatorially distinct parallelotopes in $\R^3$, and exactly $52$
distinct parallelotopes in $\R^4$.  It is still not known how many
combinatorially distinct parallelotopes there are in dimensions $5$
and higher.    It is also not known how many facets a parallelotope
may have in general (see \cite{Gru} for references).

\medskip
\noindent
3.  \  Prove or disprove that if any polytope  $k$-tiles $\R^d$ by
translations, then it also $m$-tiles $\R^d$ by a lattice, for a
possibly different $m$.

\medskip
 This would give an analogue of the McMullen Theorem for $1$-tiling parallelotopes in $\R^d$, but appears to be a very difficult problem.

\bigskip
 \noindent
 4.  \ Prove or disprove  that if a $3$-dimensional polytope, which is not a prism, $k$-tiles $\R^3$ by translations with a 
 multiset $\Lambda$,  then $\Lambda$ is a union of a finite number of $3$-dimensional lattices.

\medskip
 This would prove the $3$-dimensional analogue of Kolountzakis' $2$-dimensional result \cite{Kolsurvey}.  

\bigskip
 \noindent
 5.   \ Is it always true that whenever $P$ $k$-tiles with a multiset $\Lambda$, it follows that $\Lambda + v = \Lambda$ for some
 $v \in \R^2$ ?  (This is one of Kolountzakis' open questions in \cite{Kol2})

  \bigskip \noindent
 6.  \ Find, or estimate, the smallest $k$ for which a given polytope can $k$-tile $\R^d$.  This problem is open even in two dimensions.

\bigskip


\begin{thebibliography}{99}
\small
\bibitem{Alex} A.\,D.\,Alexandrov,\textit{ Convex polyhedra}, Gosudarstv. Izdat. Tehn.-Teor. Lit.,
Moscow-Leningrad (1950), Akademie-Verlag, Berlin (1958), reprinted
in English, by Springer-Verlag, Berlin (2005).
\bibitem{Cox}  H.\,M.\,S.\,Coxeter,\textit{ Regular polytopes}, 2nd ed., MacMillan, New York (1963), 3rd ed., Dover, New York (1973).
\bibitem{Fed} E.\,S.\,Fedorov,\textit{ The symmetry of regular systems of figures}, Zap. Miner. Obshch. 21 (1885),  pp. 1-279.\\ (In Russian.)
\bibitem{Gru} P.\,M. \,Gruber,\textit{ Convex and discrete geometry}, Springer, Berlin, (2007), pp. 1-589.
\bibitem{Kolsurvey} M.\,N.\,Kolountzakis,\textit{ On the structure of multiple translational tilings by polygonal regions}, Discrete Comput. Geom. \textbf{23} (2000), 4, pp. 537-553.
\bibitem{Kol2} M.\,N.\,Kolountzakis,\textit{ The study of translational tiling with Fourier Analysis}, Lectures given at the Workshop on Fourier Analysis and Convexity, Universita di Milano-Bicocca, June 11-22, (2001).
\bibitem{Kol3}  M.\,N.\,Kolountzakis and M.\,Matolcsi, \textit{ Tilings by translation}, La Gaceta de la Real Sociedad Espanola, \textbf{13} (2010).
\bibitem{Jeff}  J.\,C.\, Lagarias and Y.\,Wang,\textit{ Tiling the line with translates of one tile},
Inventiones math. \textbf{124} (1996), pp. 341-365. 
\bibitem{McM}P.\,McMullen,\textit{ Convex bodies which tile space by translation}, Mathematika \textbf{27} (1980), pp. 113-121.
\bibitem{McM2}P.\,McMullen,\textit{ Space tiling zonotopes}, Mathematika, \textbf{22} (1975), pp. 202-211.
\bibitem{Min} H.\,Minkowski,\textit{ Allgemeine Lehrs\"atze \"uber die convexen Polyeder}, Nachr. Ges. Wiss. G\"ottingen. 1897, pp. 198-219.
\bibitem{Stein} E.\,M.\, Stein and G.\, Weiss,\textit{ Introduction to Fourier analysis on Euclidean spaces},  Princeton Mathematical Series, No. 32 (1971), pp. 1-297.
\bibitem{Ven}B.\,A.\,Venkov,\textit{ On a class of Euclidean polyhedra}, Vestnik Leningrad Univ. Ser. Math. Fiz. Him. \textbf{9} (1954), pp. 11-31 (Russian).
\bibitem{Bol}U.\,Bolle,\textit{ On multiple tiles in $\R^2$}, Intuitive Geometry (1994).
\bibitem{Zie} G.\,Ziegler,\textit{ Lectures on Polytopes},  Springer Graduate Texts in Mathematics (1995), pp. 1-365.
\end{thebibliography}
\end{document}